\documentclass{amsart}


\usepackage{amssymb}
\usepackage[all]{xy}
\usepackage{tikz-cd}

\usepackage{graphicx}
\usepackage[colorlinks=true]{hyperref} 




\theoremstyle{plain}
\newtheorem{lem}{Lemma}[section]
\newtheorem{cor}[lem]{Corollary}
\newtheorem{prop}[lem]{Proposition}
\newtheorem{thm}[lem]{Theorem}

\theoremstyle{definition}
\newtheorem{ex}[lem]{Example}
\newtheorem{rem}[lem]{Remark}
\newtheorem{dfn}[lem]{Definition}



\newcommand{\la}{\longrightarrow}
\newcommand{\Z}{\mathbb{Z}}      


\newcommand{\im}{\operatorname{im}}            
\newcommand{\End}{\operatorname{End}}  
\newcommand{\Hom}{\operatorname{Hom}}  


\DeclareMathOperator{\pt}{pt} 
\newcommand{\codim}{\operatorname{codim}}            
\newcommand{\Os}{\mathcal{O}} 
\newcommand{\PP}{\operatorname{\mathbb{P}}} 
\newcommand{\Sch}{\operatorname{Sch}}  
\newcommand{\Sm}{\operatorname{Sm}} 
\newcommand{\Ll}{\mathcal{L}} 
\newcommand{\Fl}{\text{Fl}} 

\DeclareMathOperator{\res}{res} 


\DeclareMathOperator{\Atype}{A} 
\DeclareMathOperator{\Dtype}{D} 


\newcommand{\hh}{\mathtt{h}} 
\DeclareMathOperator{\CH}{CH} 


\begin{document}

\title[Relative equivariant motives]{Relative equivariant motives and modules}

\author[B. Calm\`es]{Baptiste Calm\`es}
\author[A. Neshitov]{Alexander Neshitov}
\author[K. Zainoulline]{Kirill Zainoulline}

\address{Baptiste Calm\`es, 
Facult\'e des Sciences Jean Perrin, Universit\'e d'Artois, Rue Jean Souvraz SP 18, 62307 Lens Cedex, France}

\address{Alexander Neshitov, Department of Mathematics, Western University, Middlesex College, London ON N6A 5B7, Canada}

\address{Kirill Zainoulline, 
Department of Mathematics and Statistics, University of Ottawa, 150 Louis-Pasteur, Ottawa ON K1N 6N5, Canada}

\subjclass[2010]{14F43, 14M15, 20C08, 14C15}

\keywords{linear algebraic group, torsor, flag variety, equivariant oriented cohomology, motivic decomposition, Hecke algebra}

\begin{abstract} We introduce and study various categories of (equivariant) motives of (versal) flag varieties.
We relate these categories with certain categories of parabolic (Demazure) modules. 
We show that the motivic decomposition type of a versal flag variety depends 
on the direct sum decomposition type of the parabolic module. To do this we use 
localization techniques of Kostant-Kumar in the context of generalized oriented cohomology as well as
the Rost nilpotence principle for algebraic cobordism and its generic version.
As an application, we obtain new proofs and examples of indecomposable Chow motives of versal flag varieties.
\end{abstract}

\maketitle

\tableofcontents

\section{Introduction}

The theory of Chow motives of twisted flag varieties has been a topic of intensive investigations for decades. Inspired by results on motives of quadrics by Rost~\cite{Ro98} and Vishik~\cite{Vi00} and on motives of Severi-Brauer varieties by Karpenko \cite{Ka96}, it has developed into a powerful tool to study quadratic forms, linear algebraic groups and the associated homogeneous spaces over arbitrary fields (see~\cite{EKM} for applications to the theory of quadratic forms). Several important invariants of algebraic groups, e.g. canonical dimension, can be interpreted using the language of motives (see~\cite{PSZ}).

We fix a split reductive linear algebraic group $G$ over a field $k$ of characteristic $0$, its split maximal torus $T$ and the Borel subgroup $B\supset T$. In the present paper we focus on the study of motivic decompositions of versal flag varieties $E/P$, where $E$ is a versal $G$-torsor and $P\supset B$ is a standard parabolic subgroup of $G$. We refer to \cite{Ka17} for the recent discussion concerning Chow groups and K-theory of versal flags and to \cite{PS} and \cite{NPSZ} for recent results about motivic decompositions. It was shown in \cite{NPSZ} that direct sum decompositions of the motive of complete versal flag $E/B$ correspond to direct sum decompositions of the $D$-module $D^\star$, where $D$ is the associated affine Hecke-type algebra for $G$ and $D^\star$ is the $T$-equivariant cohomology of $G/B$. In the PhD thesis of the second author \cite{Ne16} this result was extended to all versal $E/P$'s, where $P$ is special (all $P$-torsors are locally trivial in Zariski topology). Our goal is to push these results further by investigating motives of $E/P$'s for arbitrary parabolic subgroups $P$.

We work in a bit more general situation than the theory of Chow motives. Namely we consider an oriented cohomology theory $\hh$ in the sense of Levine-Morel~\cite{LM} and the theory of the associated $\hh$-motives (see~\cite{GV} for definitions and basic properties of this category). As the first step, we establish several connections between the following pseudoabelian categories: $\hh$-motives $\mathfrak{M}_k$ of versal flags, $G$-equivariant motives $\mathfrak{M}_G$ and relative equivariant motives $\mathfrak{M}_{G|T}$ of split flags. More precisely, we show that there is a chain of 1-1 correspondences between the direct sum decompositions of the respective objects
\begin{equation}\tag{*}
[\text{versal }E/P] \in \mathfrak{M}_k \stackrel{\text{Thm.}\ref{prop:maincor}}\longleftrightarrow [G/P] \in \mathfrak{M}_G \stackrel{\text{Cor.}\ref{cor:GT}}\longleftrightarrow [G/P] \in \mathfrak{M}_{G|T}.
\end{equation}
To do this we use the generalization (Lemma~\ref{lem:hintersect}) from Chow groups to algebraic cobordism of \cite[Lemma~6.2]{VZ} and the Rost nilpotence for free theories proven in~\cite{GV}.

As the second step, we introduce the category of $W$-equivariant motives $\mathfrak{M}_T^W$, where $W$ is the Weyl group of $G$ with respect to $T$ and show (Corollary~\ref{cor:mainGP}) that there is the natural inclusion of endomorphism rings
\begin{equation}\tag{**}
\End_{\mathfrak{M}_{G|T}}([G/P]) \hookrightarrow \End_{D}(D_P^\star),
\end{equation}
where $D_P^\star$ is the so called parabolic $D$-module with respect to the $\odot$-action of~\cite[\S3]{LZZ}. (In the context of the usual cohomology $D_P^\star$ coincides with $H_T(G/P)$ and the $\odot$-action is simply the induced $W$-action studied by Brion, Knutson, Peterson, Tymoczko and others.) Informally speaking, (*) and (**) say that the motivic decomposition type of $[E/P]$ is bounded by the direct sum decomposition type of $D_P^\star$. In particular, if $D_P^\star$ is indecomposable (as a $D$-module), then so is $[E/P]$ (as a motive).

Next, we investigate the endomorphism ring $E_D=\End_{D}(D_P^\star)$ using the localization techniques of~\cite{CZZ1} and~\cite{CZZ2} (these are generalizations of the respective techniques for Chow groups and $K$-theory of Kostant-Kumar~\cite{KK86, KK90}). We describe endomorphisms of $E_D$ with respect to the localization basis (Lemma~\ref{lem:condd}) and the Schubert basis for the Chow theory (Corollary~\ref{cor:res}). We relate $E_D$ with the induced (integral/modular) representations of the Weyl group $W$ and its parabolic subgroup $W_P$. In particular, we show that if $D_P^\star$ is indecomposable, then so is $\mathrm{Ind}^W_{W_P} \mathbf{1}$ (Corollary~\ref{cor:integr}).

Finally, we apply all these results to obtain new proofs and examples of indecomposable motives of versal flag varieties. 
The most interesting example here is the 6-dimensional involution variety, 
the form of a split projective quadric twisted by a generic $HSpin_8$-torsor (see Remark~\ref{rem:hspin}).

The paper is organized as follows. First, we recall basic properties of equivariant oriented cohomology and of the associated category of equivariant motives; we introduce and study the categories of relative equivariant motives and $W$-equivariant motives. Then we investigate relations between relative equivariant motives and parabolic modules. In Section~\ref{sec:localized} we study the endomorphisms of parabolic modules using the localization techniques of \cite{CZZ1} amd \cite{CZZ2}. In the next section we study the same endomorphisms for the Chow theory with respect to the Schubert basis using the results of \cite{LZZ}. In the last section we discuss applications to Chow motives of versal flags (Sever-Brauer varieties, 4 and 6-dimensional quadrics and involution varieties). In the appendix we prove the generic nilpotence for algebraic cobordism.    

\ 

\paragraph{\it Acknowledgements} 
The first author acknowledges the support of the French Agence Nationale de la Recherche (ANR) under reference ANR-12-BL01-0005. 
The last author was partially supported by the NSERC Discovery Grant.


\section{Categories of equivariant motives}

In this section we recall basic properties of equivariant oriented cohomology and of the associated category of equivariant motives. We introduce and study the categories of relative equivariant motives and $W$-equivariant motives.

\ 

\paragraph{\it Equivariant cohomology.} 
Given a split reductive algebraic group $G$ over a field $k$ of characteristic~$0$ and given a graded oriented cohomology theory $\hh$ of Levine-Morel~\cite[\S1]{LM}, let $\hh_G$
be the $G$-equivariant oriented cohomology theory in the sense of \cite[\S2]{CZZ2} defined on the category of smooth quasi-projective $G$-varieties over $k$. A~key property of $\hh$ and of $\hh_G$ is the Quillen formula \cite[Lemma~1.1.3]{LM}
\[
c_1^{\hh}(\Ll_1\otimes \Ll_2)=F(c_1^{\hh}(\Ll_1),c_1^{\hh}(\Ll_2)),
\]
where $c_1^\hh$ is the first (equivariant) characteristic class in the theory $\hh$ (resp.~$\hh_G$), $\Ll_i$ is a (equivariant) line bundle over some ($G$-) variety $X$ and $F(x,y)\in R[[x,y]]$ is the associated formal group law over the coefficient ring $R=\hh(\pt)$ (here $\pt=\mathrm{Spec}\,k$). 

Conversely, given a formal group law $F$ over $R$ one defines the algebraic oriented cohomology theory as
\[
\hh(-):=\Omega(-)\otimes_{\Omega(\pt)}R,
\] 
where $\Omega$ is the algebraic cobordism of Levine-Morel (universal cohomology theory) over the Lazard ring $\mathbb{L}=\Omega(\pt)$ and the map $\Omega(\pt)\to R$ is determined by $F$. Such theories $\hh$ are called free theories  \cite[\S2.12]{GV}. We refer to \cite{GV} for the basic facts and properties of free theories.

By the following process (the Borel construction) one can produce many examples of equivariant oriented theories, such as equivariant Chow theory~\cite{To,EG}, $K$-theory~\cite{Me} and equivariant algebraic cobordism~\cite{HML} (see also \cite[\S2]{CZZ2} and \cite[\S3]{NPSZ} for more details):

Consider a system of $G$-representations $V_i$ and its open subsets $U_i\subseteq V_i$ such that
\begin{itemize}
\item $G$ acts freely on $U_i$ and the quotient $U_i/G$ exists as a variety over $k$,
\item $V_{i+1}=V_i\oplus W_i$ for some representation $W_i$,
\item $U_i\subseteq U_i\oplus W_i\subseteq U_{i+1}$, and $U_i\oplus W_i\to U_{i+1}$ is an open inclusion, and
\item  $\codim(V_i\setminus U_i)$ strictly increases.
\end{itemize}
Such a system is called a good system of representations of $G$. 

Now let $X$ be a smooth $G$-variety (e.g.~a projective homogeneous variety). Following~\cite[\S3 and \S5]{HML} the inverse limit induced by pull-backs 
\[
\varprojlim_i\; \hh(U_i\times^G X),\quad U_i \times^G X=(U_i \times X)/G,
\] 
does not depend on the choice of the system $(V_i,U_i)$ and, hence, defines the equivariant oriented cohomology $\hh_G(X)$.

\paragraph{\it Equivariant motives.} 
Fix a Borel subgroup $B$ of $G$. Consider the full additive subcategory of the category of $G$-equivariant $\hh$-motives generated by the motives (and their direct summands) of flag varieties $G/P$ for all standard parabolic subgroups $P$ of $G$ containing $B$. We refer to \cite[\S2]{GV} and \cite{PS} for definitions and basic properties of this category.  Recall only that morphisms in this pseudo-abelian category are given by cohomology classes in $\hh_G(X\times Y)$, where $G$ acts diagonally on the product of smooth projective $G$-varieties $X$ and $Y$ and the composition is given by the usual correspondence product:
\begin{equation}\label{eq:corrs}
\beta\circ\alpha:=p_{XZ*}(p_{XY}^*(\alpha)p_{YZ}^*(\beta)),\quad \alpha \in \hh_G(X\times Y),\; \beta \in \hh_G(Y\times Z)
\end{equation}
(here $p_{XY}$ is the projection $X\times Y\times Z \to X\times Y$ and $p_{XY}^*$, $p_{XY*}$ is the induced equivariant pull-back and push-forward respectively).

Taking the respective graded components (e.g.~$\dim X$ component of $\hh_G(X\times X)$ for irreducible $X$) we obtain the category of graded equivariant motives which we call simply \emph{$G$-equivariant motives} and denote by $\mathfrak{M}_G$. We refer to \cite[\S63,64]{EKM} for the comparison between graded and non-graded motives in case $\hh=\CH$.

To simplify the notation we will omit the grading degree for the cohomology, i.e.\  we will write $\hh_G(X\times X)$ instead of $\hh_G^{\dim X}(X\times X)$. Observe that the actual grading can be easily traced down using the fact that the pull-backs preserve the codimension (degree) $m$ and the push-forwards preserve the dimension $\dim X-m$ (codegree) of cohomology classes.

\paragraph{\it Relative equivariant motives} 
Fix a split maximal torus $T\subset B$. Similarly, let $\mathfrak{M}_T$ denote the category generated by $T$-equivariant motives of $G/P$'s, i.e.\  morphisms in $\mathfrak{M}_T$ are given by classes in $\hh_T(X\times Y)$. Since the forgetful map 
\[
\res^G_T\colon \hh_{G}(X\times Y) \la \hh_{T}(X\times Y)
\] 
commutes with pull-backs and push-forwards, it induces the forgetful functor 
\[
\mathcal{R}es^G_T\colon \mathfrak{M}_G \la \mathfrak{M}_T.
\] 
\begin{dfn}
The categorical image of $\mathcal{R}es^G_T$, i.e.\  the wide subcategory of $\mathfrak{M}_T$ with morphisms given by classes in $\im(\res^G_T)$, is called the category of \emph{relative equivariant motives} and denoted by $\mathfrak{M}_{G|T}$.
\end{dfn}
We denote the endomorphism ring of $[X]$ in $\mathfrak{M}_{G|T}$ (resp. in $\mathfrak{M}_G$) by $\mathcal{E}_{G|T}(X)$ (resp. $\mathcal{E}_G(X)$). By definition 
\[
\mathcal{E}_{G|T}(X)=\im\big(\res^G_T\colon \hh_G(X\times X) \la \hh_T(X\times X)\big)
\] 
is an algebra (with multiplication given by the correspondence product) over the commutative ring 
\[
\mathcal{E}_{G|T}(\pt)=\im\big(\res^G_T\colon \hh_G(\pt) \la \hh_T(\pt)\big)
\]
To simplify the notation we denote $\mathcal{E}_{G|T}(\pt)$ by $\mathcal{E}_{G|T}$ and $\mathcal{E}_G(\pt)=\hh_G(\pt)$ by $\mathcal{E}_G$.

\begin{ex} 
Consider the variety of complete flags $G/B$. Since the forgetful map is injective by \cite[Lemma~4.5]{NPSZ}, we can identify $\mathcal{E}_{G}(G/B)$ with the convolution ring $(\hh_G(G/B\times G/B),\circ) \simeq (\hh_{T}(G/B),\circ)$ of \cite[\S4]{NPSZ}. 
\end{ex}

\paragraph{\it The $W$-action and motives}
Let $W$ be the Weyl group of $G$ with respect to $T$ and let $X$ be a $G$-variety. 
There is a natural (left) action of $W$ on $\hh_T(X)$. It can be either realized by pull-backs induced by the right action of $W$ on each step of the Borel construction $U\times^T X$ via
\[
(u,x)T\cdot\sigma T=(u\sigma,\sigma^{-1}x)T, \quad \sigma \in N_G(T),
\] 
where $U$ is taken to have a right $G$-action; or through the natural isomorphism $\hh_T(X)\simeq \hh_G(G/T\times X)$ and the $G$-equivariant right action of $W$ on the variety $G/T\times X$ given on points by $(gT,x)\cdot\sigma T=(gT\sigma,x)$. 

The forgetful map $\res_T^G\colon \hh_G(X) \to \hh_T(X)$ factors through the $W$-invariants $\hh_T(X)^W$ by definition. 

\begin{lem} Consider the diagonal action of $G$ on the products $X\times Y$ of $G/P$'s and, hence, the induced
action of $W$ on $\hh_T(X\times Y)$.
For any $\alpha \in \hh_T(X\times Y)^W$ and $\beta\in \hh_T(Y\times Z)^W$ we have $\beta\circ\alpha\in \hh_T(X\times Z)^W$ in $\mathfrak{M}_T$.
\end{lem}

\begin{proof}
It follows from the fact that the projections $p_{XY}$, $p_{YZ}$ and $p_{XZ}$ in the definition of the correspondence product \eqref{eq:corrs} are $W$-equivariant.
\end{proof}

\begin{dfn}
The subcategory of $\mathfrak{M}_T$ with morphisms given by correspondences from $\hh_T(X\times Y)^W$ of the lemma  
is called the category of \emph{$W$-equivariant motives} and it is denoted by $\mathfrak{M}_T^W$.
\end{dfn}
Observe that the forgetful functor factors as
\[
\mathcal{R}es_T^G\colon\mathfrak{M}_G \stackrel{\mathcal{R}es_{G|T}}\la \mathfrak{M}_{G|T} \stackrel{\mathcal{R}es^W_T}\la \mathfrak{M}_T^W \stackrel{\mathcal{R}es_T}\la \mathfrak{M}_T,
\]
where $\mathcal{R}es_{G|T}$ is full and $\mathcal{R}es^W_T$, $\mathcal{R}es_T$ are faithful by definition.


\section{Equivariant motives and parabolic modules}\label{sec:eqmot}

In this section we investigate relations between relative equivariant motives and parabolic modules. 
Our main result here is Corollary~\ref{cor:mainGP}.

\ 

\paragraph{\it Realization functor}
Consider the contravariant motivic realization functor
\[
\mathcal{R}_T\colon \mathfrak{M}_T \la S\text{-}mod 
\]
to the category of $S$-modules, where $S=\hh_T(\pt)$ is the equivariant coefficient ring. 
It is given on objects by $[X] \mapsto \hh_T(X)$ and on morphisms as
\[
\alpha \in \hh_T(X\times Y)\; \mapsto\; \alpha_*\colon \hh_T(Y) \stackrel{p_{Y}^*}\to \hh_T(X\times Y)\stackrel{\alpha\cdot }\to \hh_T(X\times Y) \stackrel{p_{X*}}\to \hh_T(X).
\]
Restricting to $W$-equivariant correspondences we obtain the realization functor
\[
\mathcal{R}_T^W\colon \mathfrak{M}_T^W \la S_W\text{-}mod,
\] 
from $\mathfrak{M}_T^W$ to the category of $S_W$-modules, where $S_W$ is the twisted group algebra of $W$.
The latter is the free left $S$-module with basis $\delta_w$, $w\in W$ 
and multiplication given by the twisted product
\[
(s \delta_w) \cdot (s' \delta_{w'})= sw(s')\delta_{ww'}
\]
(here the $W$-action is given by $\delta_w(a)=w(a)$, $w\in W$, $a\in \hh_T(X)$).

\begin{lem}
There is a commutative diagram of faithful functors
\[
\xymatrix{
 \mathfrak{M}_T^W \ar[r]^{\mathcal{R}es_T}\ar[d]_{\mathcal{R}_{T}^W} & \mathfrak{M}_T\ar[d]^{\mathcal{R}_{T}} \\
 S_W\text{-}mod \ar[r]^{res_T} & S\text{-}mod
}
\]
where $res_{T}$ is induced by the ring inclusion $S\hookrightarrow S_W$, $s\mapsto s\delta_1$. 
\end{lem}

\begin{proof} The functors $\mathcal{R}es_T$ and $res_T$ are faithful by definition.

Since all $G/P$'s are $T$-equivariant cellular spaces, 
the realization maps on morphisms $\alpha \mapsto \alpha_*$ are K\"unneth isomorphisms of \cite[Lemma~3.7]{NPSZ}. 
Hence, $\mathcal{R}_T$ is faithful (for a correspondence $\alpha$ in $\mathfrak{M}_T$, $\alpha_*=0$ $\Longrightarrow$ $\alpha=0$) and so that
$\mathcal{R}_T^W$.
\end{proof}

\begin{cor}\label{cor:inclus}
The composite of functors 
\[
\mathcal{R}_T^W \circ \mathcal{R}es_{T}^W \colon \mathfrak{M}_{G|T} \la S_W\text{-}mod,\qquad [X]\mapsto \hh_T(X),\; \alpha\mapsto \alpha_*
\]
is faithful. In particular, there is the inclusion of rings
\[\mathcal{E}_{G|T}(X) \hookrightarrow \End_{S_W}(\hh_T(X)).\]
\end{cor}

\paragraph{\it Parabolic Demazure modules}
We recall the algebraic construction of the cohomology $\hh_T(G/P)$ from \cite{CZZ2} and the construction
of the $W$-action from \cite{LZZ}.

First, following~\cite[\S3]{CZZ2} we identify the equivariant coefficient ring $S=\hh_T(\pt)$ with the so called formal group algebra 
\[
R[[x_\lambda]]_{\lambda\in T^*}/(x_0,x_{\lambda+\mu}-F(x_\lambda,x_\mu))
\] 
corresponding to the formal group law $F$ of the theory $\hh$. 

At the next step, assuming that $S$ satisfies \cite[Assumption~5.1]{CZZ2} (which basically says that all $x_\alpha$'s are regular in $S$) we localize $S$ at all $x_\alpha$'s (characteristic classes) corresponding to positive roots $\alpha$ of the root system $\Sigma$ of $G$. The resulting localized ring is denoted by $Q$. 

Now consider the respective localized twisted group algebra $Q_W$ of $W$. 
The subalgebra of $Q_W$ generated by the so called Demazure elements $X_\alpha={x_\alpha}^{-1}(1-\delta_\alpha)$, $\alpha\in \Sigma$ 
and elements of $S$ is called the {\it formal affine Demazure algebra} and is denoted by $D$. 

\begin{ex}
By \cite{NPSZ} the algebra $D$ can be identified with the convolution algebra $(\hh_G(G/B\times G/B),\circ)$. 
For the Chow theory  $D$ coincides with the nil (affine) Hecke algebra and for the $K$-theory it gives the $0$-affine Hecke algebra.
\end{ex}

Let $W^P$ denote the set of unique minimal left coset representatives of $W/W_P$.
Consider a free $Q$-module $Q_{W/W_P}$ on the basis $\{\delta_{w}\}$ indexed by $w\in W^P$. 
We denote by $D_P$ the image of $D$ under the canonical projection $p\colon Q_W \to Q_{W/W_P}$, and call it the parabolic Demazure algebra.

We will extensively use two key results concerning $D_P$:
The first (see \cite[Theorem~8.11]{CZZ2}) says that the cohomology ring $\hh_T(G/P)$ 
can be identified with the $S$-dual $D_P^\star=\Hom_S(D_P,S)$. 
The second (see \cite[\S3]{LZZ}) shows that the $W$-action on $\hh_T(G/P)$ is, indeed, obtained by restricting the so called $\odot$-action of $Q_{W}$ on $Q_{W/W_P}^*=\Hom_Q(Q_{W/W_P},Q)$ to the action of $S_W$ on $D_P^\star$. 
Together with Corollary~\ref{cor:inclus} it gives
\begin{cor}\label{cor:mainGP} 
For any parabolic subgroup $P$ of $G$ there is the inclusion of rings
\[
\mathcal{E}_{G|T}(G/P) \hookrightarrow \End_{S_W}(D_P^\star).
\]
\end{cor}
Moreover, in \cite{LZZ} it was shown that the $\odot$-action restricts to the action of $D$ on $D_P^\star$ (recall that $S_W\subset D\subset Q_W$). Hence, any $S_W$-equivariant endomorphism of $D_P^\star$ is $D$-equivariant.
So we can replace the endomorphism ring $\End_{S_W}(D_P^\star)$ of the corollary by
$\End_{D}(D_P^\star)$. 

\begin{dfn}
The $D$-module $D_P^\star$ with respect to the $\odot$-action 
is called the parabolic module.
\end{dfn}

\paragraph{\it Direct sum decompositions}
Recall that if $\mathcal{E}$ is the endomorphism ring of an object $M$ in some pseudoabelian category $\mathcal{A}$ (e.g. $M=[X]$, $\mathcal{E}=\mathcal{E}_{G|T}(X)$ in $\mathcal{A}=\mathfrak{M}_{G|T}$), then there is a 1-1 correspondence between direct sum decompositions of $M$ and 
complete finite systems of pairwise orthogonal idempotents $\{p_i\}_{i\in I}$ in $\mathcal{E}$
\begin{equation}\label{eq:11corr}
M=\bigoplus_{i\in I} \mathrm{coker}(p_i) \quad \longleftrightarrow\quad  \sum_{i\in I} p_i =\mathrm{id},\; p_i\circ p_j=0, i\neq j.
\end{equation}
Moreover, two direct summands $\mathrm{coker}(p_i)$ and $\mathrm{coker}(p_j)$ of $M$ are isomorphic in $\mathcal{A}$ (hence, so are the idempotents $p_i$ and $p_j$ of $\mathcal{E}$)
if and only if there exist $\theta_{ij}\in p_i \circ \mathcal{E} \circ p_j$ and $\theta_{ji}\in p_j\circ \mathcal{E} \circ p_i$ such that $\theta_{ij}\circ \theta_{ji}=p_i$ and $\theta_{ji}\circ \theta_{ij}=p_j$.

We say that two direct sum decompositions of $M$ (resp. two systems of idempotents in $\mathcal{E}$) are isomorphic if they coincide up to isomorphisms of summands. 
Finally, we say that $M$ is indecomposable if there are no non-trivial direct summands of $M$ (resp. if there are no non-trivial idempotents in $\mathcal{E}$).

Translating the inclusion of Corollary~\ref{cor:mainGP} into the language of idempotents and direct sum decompositions we obtain
\begin{cor}\label{cor:deco}
If $D_P^\star$ is indecomposable (as the parabolic $D$-module), then so is
$[G/P]$ in $\mathfrak{M}_{G|T}$ (as the motive).
\end{cor}

\section{$G$-equivariant vs. relative equivariant motives}

In the present section we relate the categories of $G$-equivariant motives and relative equivariant motives of split flags. 
Our main results are Theorem~\ref{prop:maincor} and Corollary~\ref{cor:GT}.

\ 

\paragraph{\it Motives of versal flag varieties} 
Let $E$ be a $G$-torsor over a field $k$.
Consider the usual (non-equivariant) category of $\hh$-motives $\mathfrak{M}_k$ generated by the motives (and their direct summands) 
of the twisted flag varieties $E/P$'s for all parabolic subgroups $P$'s (see e.g. \cite[\S3]{GV}). Observe that the morphisms in $\mathfrak{M}_k$ are given by graded correspondences in
$\hh(X\times Y)$.

Following \cite[\S4]{NPSZ} 
consider the composite  of $P\times P$-equivariant maps 
\[
E\times E\stackrel{\simeq}\la E\times G\la G,
\]
where first map is the isomorphism by the definition of a torsor, the second map is the projection on the second factor,
the action of $P\times P$ on the $E\times E$ is given by $(e_1,e_2)\cdot(p_1,p_2)=(e_1p_1,e_2p_2)$ and on $G$ is given by $g\cdot(p_1,p_2)=p_1^{-1}gp_2$.
The induced pullback
\[
\gamma\colon \hh_G(G/P\times G/P)\la\hh_{P\times P}(E\times E)=\hh(E/P\times E/P)
\]
is the ring homomorphism 
$\gamma\colon \mathcal{E}_{G}(G/P) \to \mathcal{E}(E/P)$
from endomorphisms of the $G$-equivariant motive $[G/P]$ to endomorphisms of the (non-equivariant) motive $[E/P]$.

Consider the following important case of a torsor $E$ and variety $E/P$:

Suppose $G$ is a split reductive group defined over a field $k'$ of characteristic 0.
Recall that a $G$-torsor $E$ is called versal (or generic) if it is the generic fiber of the quotient map $GL_N \to GL_N/G$ for an embedding of $G$ into $GL_N$ for some $N\ge 1$, i.e.\  $E$ is a $G$-torsor over the function field $k=k'(GL_N/G)$ (see e.g. \cite{Ka17}). 
Given such $E$, the respective twisted flag variety $E/P$ over $k$ is called the versal flag variety.
Informally speaking, the versal torsor $E$ (resp. $E/P$) can be viewed as the `most twisted' form of $G$ (resp. $G/P$).

\paragraph{\it Lifting of idempotents}
We recall several facts concerning the lifting of idempotents following \cite[\S2]{PSZ}.

Suppose $\gamma\colon \mathcal{E}' \to \mathcal{E}$ 
is a ring homomorphism.
We say that $\gamma$ lifts idempotents and isomorphisms between them if for any 
complete finite system of pairwise orthogonal idempotents $\{p_i\}_{i\in I}$ in $\mathcal{E}$ there exists
a complete system of pairwise idempotents $\{q_i\}_{i\in I}$ in $\mathcal{E}'$ such that $\gamma(q_i)=p_i$ and 
for any $\theta_{ij}$, $\theta_{ji}$ defining an isomorphism between $p_i$ and $p_j$ 
there exist $\theta_{ij}'$, $\theta_{ji}'$ which define an isomorphism between $q_i$ and $q_j$ with 
$\gamma(\theta_{ij}')=\theta_{ij}$, $\gamma(\theta_{ji}')=\theta_{ji}$.
Such $\gamma$ then induces the 1-1 correspondence between isomorphisms classes of
systems of idempotents in $\mathcal{E}$ and $\mathcal{E}'$.

We may additionally assume that both rings $\mathcal{E}$ and $\mathcal{E}'$ are graded, 
e.g.\  for motives. 
In this case $\gamma$ has to be a homomorphism of graded rings, 
all idempotents have degree $0$ and all isomorphisms are homogeneous (see \cite[Definition~2.3]{PSZ}).
In view of \eqref{eq:11corr} such $\gamma$ 
will induce the 1-1 correspondence between isomorphism classes
of direct sum decompositions of the respective motives.

We are now ready to state our main result.

\begin{thm}\label{prop:maincor} Suppose $G$ is a split reductive algebraic group over a field $k$ of characteristic 0. 
Suppose $\hh$ is a free theory. 
Consider the associated $G$-equivariant theory $\hh_G$ and let $\mathfrak{M}_G$ be the respective category of $G$-equivariant motives.
Suppose $E$ is a versal $G$-torsor over $k$.

The pull-back $\gamma$ 
induces a 1-1 correspondence (up to isomorphisms) between direct sum decompositions of the motive $[G/P]$ in  $\mathfrak{M}_G$ and direct sum decompositions 
 of the motive $[E/P]$ in $\mathfrak{M}_k$.
\end{thm}

\begin{proof} Consider the flag variety $G/P$.
Let $\{U_i\}_i$ be the sequence of $G$-representations from the Borel construction for $\hh_G$. 
Then $\hh_{G}(G/P\times G/P)=\varprojlim_i\; \hh((U_i^2\times G)/P^2)$, where the latter quotient is defined on points $(u_1,u_2)\in U_i^2$, $g\in G$, $(p_1,p_2)\in P^2$ as
\[
(U_i^2\times G)/P^2=(U_i^2\times G)/(u_1,u_2,g)\sim (u_1p_1,u_2p_2,p_1^{-1}g p_2).
\]

Let $V\supset E$ be the ambient $G$-representation, i.e.\ $E$ is $G$-invariant open in $V$.  
The map $\gamma$ is the limit of
surjective pullbacks (cf.~the proof of \cite[Lemma~7.9]{NPSZ})
\begin{align*}
\gamma_i\colon\hh((U_i^2\times G)/P^2)\stackrel{\simeq}\la & \hh((U_i^2\times V\times G)/P^2)\la \\
& \hh((U_i^2\times E\times G)/P^2) \stackrel{\simeq}\la\hh((U_i^2\times E\times E)/P^2).
\end{align*}

We now slightly modify the proof of~\cite[Lemma~3.2]{VZ}. 
By the localization sequence for $\hh$ each element $\phi$ in the kernel of $\gamma_i$ lies in the image of 
\[\hh((U_i^2\times Z\times G)/P^2) \stackrel{\iota_*}\la \hh((U_i^2\times V \times G)/P^2),\] 
where $Z=V\setminus E \stackrel{\iota}\hookrightarrow V$ is the closed complement of $E$ in $V$.
Consider the quotient $(U_i^{d+1}\times G^d)/P^{d+1}$ defined as $U_i^{d+1}\times G^d$ modulo 
\[
(u_1,\ldots,u_{d+1},g_1,\ldots,g_d)\sim (u_1p_1,\ldots,u_{d+1}p_{d+1},p_1^{-1}g_1p_2,\ldots, p_1^{-1}g_dp_{d+1})
\]
for $(u_1,\ldots,u_{d+1})\in U_i^{d+1}$, $(g_1,\ldots,g_d)\in G^d$ and $(p_1,\ldots,p_{d+1})\in P^{d+1}$.
Consider the maps
\[\pi_{j,j'}\colon (U_i^{d+1}\times G^d)/P^{d+1}\la (U_i^2\times G)/P^2,\quad 1\le j<j'\le d+1\]
which descend from the standard projection $U_i^{d+1}\to U_i^2$ on the $j$ and $j'$ components 
and the map $G^d\to G$ defined on points by $(g_1,\ldots,g_d)\mapsto g_{j-1}^{-1}g_{j'-1}$ for $j>1$ and
$(g_1,\ldots, g_d) \mapsto g_{j'-1}$ for $j=1$.
Then the $d$-fold correspondence product on $\hh((U_i^2\times G)/P^2)$ is given by 
\[
\phi \circ \ldots \circ \phi=(\pi_{1,d+1})_*(\prod_{j=1}^d \pi_{j,j+1}^*(\phi)).
\]
Since for each $j$ there is the Cartesian square
\[
\xymatrix{
(U_i^{d+1} \times Z\times G^d)/{P^{d+1}}\ar[d] \ar[r]^\iota & (U_i^{d+1} \times V\times G^d)/{P^{d+1}}\ar[d]^{\pi_{j,j+1}}\\
(U_i^{2} \times Z\times G)/{P^{2}}\ar[r] & (U_i^{2} \times V\times G)/{P^{2}},
}
\]
for every $\phi \in \hh((U_i^{2} \times V\times G)/{P^{2}})$ 
supported on $(U_i^{2} \times Z\times G)/{P^{2}}$ the element $\pi_{j,j+1}^*(\phi)$ 
is supported on $(U_i^{d+1} \times Z\times G^d)/{P^{d+1}}$. 
By~Lemma~\ref{lem:hintersect} applied to 
\[
p_{j,j+1}\colon Y=(U_i^{d+1} \times V\times G^d)/{P^{d+1}}\la X=(U_i^{2} \times V\times G)/{P^{2}}\] 
we obtain that for $d>\dim(V)/\codim(Z)$ we have
$\prod_{j=1}^d \im (\iota_*)=0$, and, hence, $\phi^{\circ d}=0$. 
Therefore, $\gamma$ is a limit of surjective maps with nilpotent kernels. 

Now by~\cite[Prop.~6.2.1]{Ne16} each inclusion $U_i\to U_{i+1}$ induces a surjective homomorphism $\ker(\gamma_{i+1})\to\ker(\gamma_{i})$. 
So by~\cite[Lemma 4.3.4]{Ne16} which is the limit-generalization~of~\cite[Prop.~2.6]{PSZ}, the map $\gamma$ lifts idempotents and isomorphisms between idempotents
in the sense of \cite[Def.~2.3]{PSZ}. Hence, it induces the 1-1 correspondence between isomorphism classes of direct sum decompositions of $[G/P]$ in $\mathfrak{M}_G$
and $[E/P]$ in $\mathfrak{M}_k$.
\end{proof}

\begin{cor}\label{cor:GT} Under the hypothesis of the theorem
there is a 1-1 correspondence between direct sum decompositions of the motive $[G/P]$ in  $\mathfrak{M}_G$ 
and in $\mathfrak{M}_{G|T}$.
\end{cor}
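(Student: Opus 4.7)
My plan is to reformulate Corollary~\ref{cor:GT} as the statement that the natural surjective ring map
\[
r := res^G_T \colon R_G \twoheadrightarrow R_T,
\]
where $R_G = \End_{\mathcal{M}ot^G_G}(M(G/P))$ and $R_T = \End_{\mathcal{M}ot^G_T}(M(G/P))$, lifts idempotents and lifts isomorphisms between direct summands. Surjectivity is immediate from the definition of morphisms in $\mathcal{M}ot^G_T$ as images of $res^G_T$. A 1--1 correspondence on direct sum decompositions then follows by standard lifting of idempotents.

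The strategy is to bootstrap the lifting property from Proposition~\ref{prop:maincor}. Fix a versal $G$-torsor $E$ (passing to an extension of $k$ if needed). By Prop~\ref{prop:maincor}, the pullback $\gamma \colon R_G \to \End(M(E/P))$ is surjective and lifts idempotents and isomorphisms strictly, being realized as an inverse limit of surjective ring maps $\gamma_i$ with nilpotent kernels indexed by the Borel construction stages $U_i$. The first key step in my plan is to show that $\gamma$ factors through $r$, i.e., that there exists $\bar\gamma \colon R_T \to \End(M(E/P))$ with $\gamma = \bar\gamma \circ r$. For this, I would compare the $G$- and $T$-equivariant Borel approximations stage-by-stage: at stage $U_i$ one obtains a natural commutative square relating $r_i$, $\gamma_i$ and their $T$-equivariant counterparts $\gamma_{T,i}\colon \hh(U_i\times^T(G/P)^2)\to \hh(E\times^T(G/P)^2)$, together with the pullback $\pi^* \colon \hh(E/P \times E/P) \to \hh(E \times^T (G/P)^2)$ along the $G/T$-fibration $E \times^T (G/P)^2 \to E \times^G (G/P)^2 = E/P \times E/P$. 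The map $\pi^*$ is injective by a Leray--Hirsch argument for the cellular fiber $G/T$ (as in Lemma~\ref{lem:cell}), and passing to the inverse limit over stages yields the desired $\bar\gamma$.

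Given the factorization $\gamma = \bar\gamma \circ r$, the inclusion $\ker(r) \subseteq \ker(\gamma)$ transfers the tower structure of surjective nilpotent-kernel quotients from $\gamma$ to $r$. Hence $r$ itself lifts idempotents and isomorphisms strictly by exactly the same argument as in Prop~\ref{prop:maincor}, appealing to \cite[Lemma~4.3.4]{Ne} and \cite[Proposition~6.2.1]{Ne}. The main obstacle will be the verification of the commutative square and of the Leray--Hirsch injectivity of $\pi^*$ for the oriented cohomology $\hh$; both are geometric statements amenable to the methods developed in Sections~\ref{sec:eqmot}--\ref{sec:mm}, but require careful Borel-construction bookkeeping at the stage where $E$ appears as both a $G$- and a $T$-approximation.
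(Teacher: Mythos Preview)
Your approach is genuinely different from the paper's, and the difference is exactly the content of the Remark that immediately follows Corollary~\ref{cor:GT}. The paper does \emph{not} factor $\gamma$ through $res^G_T$. Instead it completes the square on the other side: it uses the restriction $\phi\colon \End(M(E/P))\to \End_{\bar k}(M(G/P))$ to a splitting field, so that the composite $\phi\circ\gamma$ equals the forgetful map, which certainly factors through $res^G_T$. The point is that $\ker\phi$ is a nil ideal by the Rost nilpotence theorem of Gille--Vishik \cite{GV}; combined with the stage-wise nilpotence of $\ker\gamma_i$ from Proposition~\ref{prop:maincor}, this forces each $\ker(res_i)$ to consist of nilpotents, and the lifting lemmas finish. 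So the paper's argument is short precisely because it outsources the hard step to \cite{GV}.

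Your plan, by contrast, would give a proof that avoids Rost nilpotence --- which the authors explicitly flag as an open problem. The gap is the injectivity of $\pi^*\colon \hh((E/P)^2)\to \hh(E\times^T(G/P)^2)$. Your ``Leray--Hirsch for the cellular fibre $G/T$'' does not apply here: the fibration $E\times^T(G/P)^2\to (E/P)^2$ is the $E$-twist of the projection $G/T\times(G/P)^2\to(G/P)^2$, hence only \'etale-locally trivial, and for a non-trivial (versal) $E$ it is \emph{not} Zariski-locally trivial. Concretely, one checks $E\times^B(G/P)^2\cong E/B\times_k (E/P)^2$, so after the harmless $B/T$-affine-bundle identification your $\pi^*$ is the pullback along the first projection $E/B\times (E/P)^2\to (E/P)^2$; its injectivity would require a degree-one zero-cycle on $E/B$, whereas for versal $E$ the index of $E/B$ is the torsion index of $G$ and is typically $>1$. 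Lemma~\ref{lem:cell} is about K\"unneth for equivariant cellular spaces and does not address this twisted situation. A smaller point: even granting injectivity of $\pi^*$, the commutative square only yields $\ker(res)\subseteq\ker(\gamma)$, not an actual factorization $\gamma=\bar\gamma\circ res$; fortunately the kernel containment is all that the lifting argument needs, so this part of your write-up should be rephrased accordingly.
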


\begin{proof}
In the commutative diagram
\[
\xymatrix{
\hh_G(G/P\times_k G/P) \ar[r]^\gamma \ar[d]_{\res^G_T} &  \hh(E/P\times_k E/P)\ar[d]^{\phi} \\
\hh_T(G/P\times_k G/P)  \ar[r]^{\res^T_{\bar k}} &  \hh(G/P\times_{\bar k} G/P)
}
\]
the map $\phi$ has nilpotent kernel by the Rost Nilpotence Principle for free theories (see \cite[Corollary~4.4]{GV}). 
By the theorem, $\gamma$ is a limit of surjective maps with nilpotent kernels, hence, so is the restriction $\res^G_T$. 
The result then follows again by the lifting of idempotents (see e.g.~\cite[Lemma 4.3.4]{Ne16}). 
\end{proof}

Summarizing the above arguments 
we obtain the following chain of 1-1 correspondences between direct sum decompositions of the respective motives:
\[
[\text{versal }E/P] \in \mathfrak{M}_k \longleftrightarrow [G/P] \in \mathfrak{M}_G \longleftrightarrow [G/P] \in \mathfrak{M}_{G|T}.
\]
Combining this with Corollary~\ref{cor:deco} we obtain
\begin{cor}\label{cor:mainindec}
If the parabolic $D$-module $D_P^\star$ is indecomposable, then so is the motive   (in $\mathfrak{M}_k$) of the versal flag variety $E/P$.
\end{cor}


\section{Localized endomorphisms}\label{sec:localized}

In this section we study the endomorphism ring $\End_{D}(D_P^\star)$ using the localization techniques of \cite{CZZ1,CZZ2} and the results
from \cite[\S3]{LZZ}. Our main results are Corollary~\ref{cor:doublec} and Lemma~\ref{lem:condd}.

\

\paragraph{\it Endomorphisms of localized modules}
Consider the $Q$-dual $Q_{W/W_P}^*$ with the basis $\{f_w\}$, $w\in W^P$ dual to $\{\delta_w\}$. 
Following \cite[\S3]{LZZ} the $\odot$-action of $Q_W$ on $Q_{W/W_P}$ is defined by
\[
q\delta_w\odot pf_v=qw(p)f_{\overline{wv}},\quad q,p\in Q,\;\; v\in W^P,
\]
where $\overline{wv}$ denotes the minimal coset representative of $wvW_P$. Consider the endomorphism ring 
$E_Q=\End_{Q_W}(Q_{W/W_P}^*)$ of $Q_W$-modules with respect to the $\odot$-action. 

Since the $Q_W$-module $Q_{W/W_P}^*$ is generated by $f_1$,
any  $\phi\in E_Q$ is uniquely determined by it value at $f_1$ that is 
\begin{equation}\label{eq:pres}
\phi(f_1)=\sum_{w\in W^P}c_{w}f_{w}, \; c_{w}\in Q.\end{equation}
Moreover, we have
\begin{lem}\label{lem:conded} $\phi \in E_Q$ $\Longleftrightarrow$ 
$v(c_{w})=c_{\overline{vw}}$,  $\forall v\in W_P$, $\forall w\in W^P$.
\end{lem}

\begin{proof}
$\Longrightarrow:$ Since $\delta_v\odot \phi(f_{1}) = \phi(\delta_v\odot f_{1})=\phi(f_{1})$ for all $v\in W_P$, we obtain
\[
\sum_{w\in W^P} v(c_{w})f_{\overline{vw}}=\sum_{w\in W^P}c_{w} f_{w}\quad \text{for all }v\in W_P.
\]

$\Longleftarrow:$ 
For each $w'\in W^P$ we set  $\phi(f_{w'}):=\delta_{w'}\odot \phi(f_1)$. 
It is enough to show that $\delta_w\odot \phi(f_{w'})=\phi(\delta_w\odot f_{w'})$ for all $w,w'\in W^P$ or, equivalently,
\[
\delta_{ww'}\odot \phi(f_1)=\phi(f_{\overline{ww'}})=\delta_{\overline{ww'}}\odot \phi(f_1).
\]
But $ww'=\overline{ww'}v$, $v\in W_P$ and $\delta_v\odot \phi(f_1)=\phi(f_1)$ by the assumption. 
\end{proof}

Let ${}^PW^P$ denote the set of minimal double $W_P$-coset representatives. 
Since each $w\in W^P$ can be written uniquely as $w=vu$, where $v\in W_P$ and $u\in {}^PW^P$,
we can rewrite \eqref{eq:pres} as
$\phi(f_1)=\sum_{w\in W^P,\, w=vu} v(c_u)f_w$ and we obtain
\begin{cor}\label{cor:doublec} $\phi\in E_Q$ is uniquely determined by the coefficients $c_{u}\in Q^{W_P\cap uW_Pu^{-1}}$, $u\in {}^P W^P$. \end{cor}

\begin{ex}
Let $G$ be of Dynkin type $\Atype_n$ and let $P$ be of type $\Atype_{n-1}$, i.e.\ $G/P=\PP^{n}$.
We have $W=\langle s_1,\ldots,s_n\rangle$, $W_P=\langle s_2,\ldots,s_{n}\rangle $ and $W^P=\{1,v_1,v_2,\ldots,v_n\}$,
where $s_i$ denotes the $i$-th simple reflection and $v_i=s_is_{i-1}\ldots s_1$.

For any $\phi\in \End_{Q_W}(Q_{W/W_P}^*)$ presentation \eqref{eq:pres} can be written as \[
\phi(f_{\bar 1})=c_0 f_{1}+c_1 f_{v_1}+\ldots +c_n f_{v_n},\text{ where }c_i\in Q.
\]
Since there are only two double cosets $W_P$ and $W_Ps_1W_P$, $\phi$ is determined by two coefficients $c_0\in Q^{\langle s_2,\ldots,s_n\rangle}$ and $c_1\in Q^{\langle s_3,\ldots, s_n\rangle}$ with 
$c_j=\tfrac{v_j}{v_1}(c_1)$, $1\le j\le n$.
\end{ex}

\paragraph{\it Localized endomorphisms} The localization $D \subset Q_W$ induces the inclusion
\[
E_D=\End_{D}(D_P^\star)\hookrightarrow E_Q=\End_{Q_W}(Q_{W/W_P}^*).
\]
We investigate its image.

According to \cite[\S3]{LZZ} $D_P^\star$ is a $D$-module generated 
by the class of a point \[[\pt]=x_{\Pi/P}f_{1} \in S_{W/W_P}^\star=\Hom_{S}(S_W,S),\] where $x_{\Pi/P}=x_\Pi/x_P$, $x_P=\prod_{\alpha\in \Sigma_P^{-}}x_\alpha$ and $x_\Pi=\prod_{\alpha\in \Sigma^-}x_\alpha$, $\Sigma_P^-$ is the set of all negative roots of the root subsystem for $P$.

Therefore, any $\phi\in E_D$ is uniquely determined by its value on $[\pt]$. 
On the other hand, $\phi([\pt])$ belongs to  $D_P^\star$ as an element of 
$S_{W/W_P}^\star \subset Q_{W/W_P}^*$ if and only if it satisfies the criteria of \cite[Thm.~11.9]{CZZ1}.
Combining these together we obtain 
\begin{lem}\label{lem:condd}
An endomorphism $\phi\in E_Q$ comes from $E_D$ if and only if its coefficients $c_{w}\in Q$ satisfy
\[
x_{\Pi/P}c_{w} \in S\text{ and }x_{w(\alpha)}\mid x_{\Pi/P}(c_{w} - c_{\overline{s_{w(\alpha)}w}})\text{ for all }\alpha \notin \Sigma_P.
\]
\end{lem}

The localization $S_W \subset Q_W$ 
also induces the inclusion 
\[
E_S=\End_{S_W}(S_{W/W_P}^\star) \hookrightarrow E_Q=\End_{Q_W}(Q_{W/W_P}^*)
\]
where $\phi\in E_Q$ comes from $E_S$ if and only if all the coefficients $c_w$ of $\phi$ are in~$S$. 
From now on we identify $E_D$ and $E_S$ with their images in $E_Q$.
We claim that 
\begin{lem}\label{lem:restrend} $E_S\subset E_D$ in $E_Q$.
\end{lem}

\begin{proof}
Consider an endomorphism $\phi \in E_S$ with $\phi(f_{1})=\sum c_{w} f_{w}$, $c_w\in S$.
By Lemma~\ref{lem:condd} it is enough to show that
\[
x_{w(\alpha)}\mid x_{\Pi/P}(c_w - c_{\overline{s_{w(\alpha)}w}})\text{ for all }\alpha \notin \Sigma_P.
\]
If $w(\alpha)\notin \Sigma_P$, then $x_{w(\alpha)} \mid x_{\Pi/P}$, hence we may 
assume $w(\alpha) \in \Sigma_P$.
By Lemma~\ref{lem:conded}, $c_{\overline{s_{w(\alpha)}w}}=s_{w(\alpha)}(c_w)$. Then $x_{w(\alpha)} \mid (c_w - s_{w(\alpha)}(c_w))$ by \cite[Corollary~3.4]{CPZ}.
\end{proof}
Finally, observe that
the endomorphism $\phi$ which maps $[\pt]$ to $\sum_{w\in W^P} f_w\in D_P^\star$ 
clearly belongs to $E_D$ but not to $E_S$ in general.


\section{Endomorphisms in terms of the Schubert basis}

In the present section we concentrate on the study of the Chow theory $\hh=\CH$ 
(here $R=\Z$ or $2$ is not a zero-divisor in $R$). In this case $S$ is the polynomial ring 
in the basis of characters $T^*$, $X_i=X_{\alpha_i}=(1-\delta_{s_i})/\alpha_i$ corresponds to the classical divided difference operator $\Delta_i$ for  the simple root $\alpha_i$ and $D$ is the nil (affine) Hecke algebra.

\ 

\paragraph{\it Endomorphisms of $D$-modules}
Recall that there is the free $S$-basis of $\hh_T(G/P)$ given by the so called Schubert classes $\xi_w$, $w\in W^P$.
The latter are the classes of the Schubert varieties of dimensions $l(w)$ (the length of $w$), 
the closures of the orbits $BwP/P$ in $G/P$.
We have the following well-known formula for the $\odot$-action by divided difference operators
on the Schubert classes (see e.g. \cite[Example~4.8]{LZZ})
\[
X_j \odot \xi_v=
\begin{cases}
\xi_{s_jv} & \text{if } l(s_jv)\ge l(v) \text{ and }s_jv\in W^P \\
0 & \text{otherwise.}
\end{cases}
\]

We now describe endomorphisms of $D_P^\star$ with respect to the Schubert basis $\xi_w$.

Let $\phi\in E_D=\End_D(D_P^\star)$.
As an endomorphism of free $S$-modules $\phi$ is uniquely determined by its values on the classes $\xi_w$, $w\in W^P$
that is 
\[\phi(\xi_w)=\sum_{v}a_{v,w}\xi_v,\] where $(a_{w,v})$ is the corresponding generalized matrix of coefficients in $S$ 
(here the coefficients are indexed by the product of left Bruhat posets in $v$ and $w$).

Since $D$ is generated by $X_j$'s and elements of $S$, $\phi \in E_D$  if and only if 
\[
X_j\odot \phi(\xi_w)=\phi(X_j\odot \xi_w)\text{ for all }j\text{ and }w\in W^P.
\]

Using the properties of the $\odot$-action of \cite[\S3]{LZZ} we compute the left hand side as follows (for $v,w\in W^P$)
\begin{align*}
X_j\odot \sum_v a_{v,w}\xi_v &=\sum_{v} (s_j(a_{v,w})X_j+\Delta_j(a_{v,w}))\odot \xi_v \\
&=\sum_{s_jv\in W^P,\; l(s_jv)\ge l(v)} s_j(a_{v,w})\xi_{s_jv}+\sum_{v}  \Delta_j(a_{v,w})\xi_v\\
&=\sum_{v'\in W^P,\; l(s_jv')\le l(v')}s_j(a_{s_jv',w})\xi_{v'}+\sum_v \Delta_j(a_{v,w})\xi_v.
\end{align*}
As for the right hand side we obtain
\[
\phi(X_j\odot \xi_w)=
\begin{cases}
\sum_v a_{v,s_jw}\xi_v & \text{ if } l(s_jw)\ge l(w)\text{ and }s_jw\in W^P, \\
0  & \text{otherwise}.
\end{cases}
\]
Combining, we obtain the following recursive formulas for the coefficients $a_{v,w}$:
\begin{lem}\label{lem:recursive}
If $l(s_jw)\ge l(w)$ and $s_jw\in W^P$, then
\[
a_{v,s_jw}=
\begin{cases}
s_j(a_{s_jv,w})+\Delta_j(a_{v,w}) & \text{ if }l(s_jv)\le l(v), \\
\Delta_j(a_{v,w}) & \text{ otherwise}.
\end{cases}
\] 

If $l(s_jv)\le l(v)$, then (observe that $s_j\Delta_j=\Delta_j$ in the Chow theory)
\[
a_{s_jv,w}=\begin{cases}
s_j(a_{v,s_jw})-\Delta_j(a_{v,w}) & \text{ if }l(s_jw)\ge l(w)\text{ and }s_jw\in W^P, \\
-\Delta_j(a_{v,w}) & \text{otherwise}.
\end{cases}
\]
\end{lem}

We then obtain the following analogues of Lemma~\ref{lem:conded} and Corollary~\ref{cor:doublec}:

\begin{cor}\label{cor:res} 
An endomorphism $\phi\in E_D$ is uniquely determined by the coefficients 
$a_v=a_{v,1} \in S^{W_P}$, $v\in W^P$ such that $l(s_jv)\ge l(v)$ for all $s_j\in W_P$.
\end{cor}

\begin{proof}
It follows immediately by the recurrent formulas 
that $\phi$ is uniquely determined by its value at $[\pt]=\xi_1$, i.e.\  by the coefficients $a_v$.
Comparing $X_j\odot \phi(\xi_1)$ and $\phi(X_j\odot \xi_1)$ we obtain that for all $s_j\in W_P$
\[
\Delta_j(a_v)=
\begin{cases}
-a_{s_jv} & \text{ if } l(s_jv)\le l(v) \\
0 & \text{ otherwise.}
\end{cases} \qedhere
\]
\end{proof}

\paragraph{\it  Homogeneous endomorphisms}
Recall that the endomorphism ring 
of the relative equivariant motive 
consists of graded correspondences $\alpha \in \CH_T^n(G/P \times G/P)$, where $n=\dim G/P$. The respective realization map
$\phi=\alpha_*$ then preserves the dimensions of cycles and, hence, the dimensions $l(w)$ of the respective Schubert
classes $\xi_w$.
In other words, the inclusion of Corollary~\ref{cor:mainGP} 
\[
\mathcal{E}_{G|T}([G/P]) \hookrightarrow E_D=\End_{D}(D_P^\star),\qquad \alpha \mapsto \alpha_*,
\]
factors through the subring $E_D^{(0)}$ of endomorphisms $\phi$ 
where each coefficient $a_{v,w}$ is a homogeneous polynomial of degree $\dim \xi_v - \dim \xi_w= l(v)-l(w)$. 
We call such $\phi$ a homogeneous endomorphism.

If $\phi\in E_D^{(0)}$, then $\phi([\pt])=\sum_{v\in W^P} a_v \xi_v$, where $a_v=a_{v,1}\in S$ is homogeneous of degree $l(v)$. Since $\xi_v=\sum_{w\in W^P} c_{w,v} f_w$ where all $c_{w,v}$s are homogeneous of degree $n-l(v)$, we get
$\phi(f_1)=\sum_{v\in W^P} c_w f_w$ where each coefficient
$c_w=\tfrac{1}{x_{\Pi/P}}\sum_v a_vc_{w,v}$
is the quotient of homogeneous polynomials of degree $n$. 

Now $\phi$ comes from $E_S$ of Lemma~\ref{lem:restrend} if and only if 
each $c_w\in S$ is a polynomial of degree 0, i.e.\  each $c_w\in R$. 
Since $R_W=R[W]$, $R_{W/W_P}^\star=R[W/W_P]$ 
and the $\odot$-action restricted to $S=R$ coincides with the left action of $W$ on $W/W_P$, we obtain
\begin{cor}\label{cor:integr} 
We have $E_S\cap E_D^{(0)}=\End_{R[W]}(R[W/W_P])$. 
In particular, if $D_P^\star$ is indecomposable (as a $D$-module with $R=\Z$), then so is
the induced integral representation $\mathrm{Ind}^W_{W_P}\mathbf{1}$ of the Weyl group $W$.
\end{cor}

\paragraph{\it Idempotent matrices}
Observe that $a_{v,w}=0$ if $l(v)-l(w)<0$. So the generalized matrix of coefficients $(a_{v,w})$  of a homogeneous endomorphism $\phi$ is always a lower triangular matrix.

\begin{lem}\label{lem:idlem}
The generalized matrix of coefficients $(a_{v,w})$
of an idempotent $\phi$ is a lower triangular idempotent matrix. In particular, its coefficients satisfy
\[
\sum_{u,\; l(u)\le l(v)} a_u a_{v,u}=a_v,\quad  v,u\in W^P
\] 
and its diagonal is the direct sum of idempotent matrices 
\[\bigoplus_{d=0}^n (a_{v,w})_{\{(v,w) \mid l(v)=l(w)=d\}}, \quad n=\dim G/P.\]
\end{lem}

\begin{proof}
Any idempotent $\phi$ is homogeneous, so $(a_{v,w})$ is triangular and idempotent.
By definition, $\phi(\phi([\pt]))=\phi([\pt])$ which leads to 
\[
\sum_v a_v\xi_v= \phi(\sum_u a_u\xi_u)=\sum_u a_u\phi(\xi_u)=\sum_{u,v} a_ua_{v,u}\xi_v
\]
and, hence, to the formula for the coefficients. The latter statement follows from the properties of triangular idempotent matrices.
\end{proof}

\section{Applications to Chow motives}

The purpose of the present section is to demonstrate how the recursive formulas of Lemma~\ref{lem:recursive}
can be applied to show indecomposability of certain $D$-modules $D_P^\star$ 
and, hence, of Chow motives of versal flag varieties $E/P$. 

\subsection{Projective spaces}
We fix a simple split group $G$ of type $\Atype_n$ and its parabolic subgroup $P$ of type $\Atype_{n-1}$. 
The respective $G/P$ is isomorphic to the projective space $\PP^n$.
We consider the adjoint form $G=PGL_{n+1}$ so that $T^*$ is the root lattice with the basis 
given by simple roots $\{\alpha_1,\ldots,\alpha_n\}$. Set $R=\Z$.
The algebra $D$ is then the usual nil-Hecke algebra over the polynomial ring 
$S=\Z[T^*]=\Z[\alpha_1,\ldots,\alpha_n]$.

Consider an idempotent $\phi\in E_D^{(0)}$ and the associated generalized matrix of coefficients $(a_{v,w})$.

We have $W=\langle s_1,\ldots,s_n\rangle$ where $s_i$ is the simple reflection corresponding to $\alpha_i$, $W_P=\langle s_2,s_3,\ldots,s_{n}\rangle$ and $W^P=\{1,v_1,v_2,\ldots,v_n\}$,
where $v_i=s_is_{i-1}\ldots s_1$.
Set $c_{i,j}:=a_{v_i,v_j}$,  $c_{i,0}:=a_{v_i,1}$, $c_{0,j}=a_{1,v_j}$ for $i,j\ge 1$ and $c_{0,0}=a_{1,1}$. 
So $(c_{i,j})=(a_{v,w})$ is a lower-triangular idempotent $(n+1)\times (n+1)$-matrix with polynomial coefficients of degrees 
$\deg c_{i,j}=i-j$.

The recursive formulas of Lemma~\ref{lem:recursive} turn into:
\begin{align*}
c_{i,i}&=s_i(c_{i-1,i-1})+\Delta_i(c_{i,i-1})  & \text{if} \; i\ge 1,\\
c_{i,j}&=\Delta_j(c_{i,j-1})   & \text{if}\;i\neq j,\; j\ge 1,\\
c_{i,0}&=(-1)^{n-i}\Delta_{i+1,..,n}(c_{n,0})  & \text{for}\; n> i\ge 1.
\end{align*}
By the formulas we obtain:
\begin{align*}
c_{i,j} &=\Delta_{j,..,1}(c_{i,0})=(-1)^{n-i}\Delta_{j,..,1,i+1,..,n}(c_{n,0}), &  \text{ if } i>j\ge 1 \text{ (under the diagonal),}\\
c_{i,i} &=s_i(c_{i-1,i-1})+(-1)^{n-i}\Delta_{i,..,1,i+1,..,n}(c_{n,0}) & \text{ (on the diagonal)}.
\end{align*}
In other words, we have the following diagram of operators
{\small \[
\xymatrix{
c_{0,0} \ar@{.>}[rd]^{s_1} & & & \\
c_{1,0} \ar[r]^{\scriptscriptstyle\Delta_1}& c_{1,1} \ar@{.>}[rd]^{s_2} & & \\
c_{2,0} \ar@{}[d]|{\vdots} \ar[r]^{\scriptscriptstyle\Delta_1}\ar[u]^{-\scriptscriptstyle\Delta_{2}} & c_{2,1} \ar@{}[d]|{\vdots}  \ar[r]^{\scriptscriptstyle\Delta_2}& c_{2,2} \ar@{}[dr]|{\ddots} &\\
c_{n-2,0} \ar[r]^{\scriptscriptstyle\Delta_1} & \cdots & & c_{n-2,n-2}  \ar@{.>}[rd]^{s_{n-1}} & & \\
c_{n-1,0} \ar[r]^{\scriptscriptstyle\Delta_1} \ar[u]^-{-\scriptscriptstyle\Delta_{n-1}} & \cdots & & \cdots \ar[r]^-{\scriptscriptstyle\Delta_{n-1}}& c_{n-1,n-1}  \ar@{.>}[rd]^{s_n} & \\
c_{n,0} \ar[r]^{\scriptscriptstyle\Delta_1}\ar[u]^-{-\scriptscriptstyle\Delta_n} & \cdots & & \cdots \ar[r]^-{\scriptscriptstyle\Delta_{n-1}} & c_{n,n-1} \ar[r]^-{\scriptscriptstyle\Delta_n}& c_{n,n} 
}
\]}
Using relations in $D$ for the divided difference operators $\Delta_j$ 
and the fact that $\Delta_j(c_{n,0})=0$ for all $j\neq 1,n$ (Corollary~\ref{cor:res}) we get
\[
\Delta_k\circ \Delta_{i-1,..,1,i+1,..,n}(c_{n,0})=0,\quad \text{ for all }k\neq i.
\]
So $c_{i,i-1}\in S^{W_i}$, where $W_i$ is generated by all simple reflections except the $i$-th one.
Since $c_{i,i-1}$ has degree 1, we can express it as
$
c_{i,i-1}=b_1\alpha_1+\ldots+ b_n\alpha_n$, $b_i\in \Z$.
Then $c_{i,i-1}\in S^{W_i}$ is equivalent to
\[
b_2=2b_1,b_3=3b_1,\ldots, b_i= i b_1=(n+1-i)b_n,\ldots,b_{n-2}=3b_n,b_{n-1}=2b_n.
\]

Assume $n+1=p^r$ for some prime $p$ and $r\ge 1$. 
Then 
\[
p\mid \Delta_i(c_{i,i-1})=2b_i-b_{i-1}-b_{i+1}=b_1+b_n.
\] 

Since $\phi$ is an idempotent, all diagonal elements $c_{i,i}$ are idempotent as well by Lemma~\ref{lem:idlem}, i.e.\ $c_{i,j}=0,1$.
The recursive formulas and the fact that $p\mid \Delta_i(c_{i,i-1})$ then imply that $c_{i,i}=c_{i-1,i-1}$ for all $i$, i.e.\ that
there are no nontrivial idempotents in $E_D$.
In other words, we obtain 
\begin{prop} Let $G=PGL_{n+1}$, 
where $n=p^r-1$ for some $r\ge 1$ and a prime $p$. 
Let $P$ be the standard parabolic subgroup of type $\Atype_{n-1}$. 

Then the $D$-module $D_P^\star$ is indecomposable over $\Z$ (over $\Z/p\Z$).
\end{prop}

\begin{rem}
In view of Corollary~\ref{cor:mainindec} this fact implies 
the celebrated result by Karpenko \cite{Ka96} on indecomposability of the integral motive of the Severi-Brauer variety of a generic algebra.
On the other side according to Corollary~\ref{cor:integr} 
it also implies that the induced integral (mod $p$) representation $\mathrm{Ind}^{S_{n+1}}_{S_n} \mathbf{1}$ of the symmetric group 
is indecomposable.
\end{rem}

\subsection{Klein quadrics}
Let $G$ be a simple split group of type $\Atype_3$ 
with the character lattice $T^*=\langle \alpha_1,\alpha_2,\alpha_3,\omega_2 \rangle$ 
(here $\omega_2$ is the respective fundamental weight), i.e.\  $G=SL_4/\mu_2$. 
Let $P$ be a parabolic of type $\Atype_1\times A_1$, i.e.\  
$G/P=Gr(2,4)$ is a split 4-dimensional smooth projective quadric. 

Consider an idempotent $\phi\in E_D^{(0)}$ and the associated generalized matrix $(a_{v,w})$.

By definition the Weyl group $W=\langle s_1,s_2,s_3\rangle$, $W_P=\langle s_1,s_3\rangle$ 
and the set of minimal coset representatives $W^P$ is given by the Hasse diagram
{\small \[
\xymatrix{
1 \ar[r]^{\scriptscriptstyle s_2\cdot }  & s_2 \ar[d]_{\scriptscriptstyle s_3\cdot }\ar[r]^{\scriptscriptstyle s_1\cdot }  & s_1s_2\ar[d]^{\scriptscriptstyle s_3\cdot } & \\
& s_3s_2 \ar[r]^{\scriptscriptstyle s_1\cdot }& s_1s_3s_2\ar[r]^{\scriptscriptstyle s_2\cdot } & s_2s_1s_3s_2
}
\]}
For simplicity, set $a_{ijk..}=a_{s_is_js_k..}$ and $a_\emptyset=a_1$.

By the recursive formulas of Lemma~\ref{lem:recursive} and of Corollary~\ref{cor:res} we obtain:
\begin{align*}
a_{2,2} &=a_\emptyset+\Delta_{2,1,3}(a_{132}), \\
a_{12,12} &=a_{2,2}-\Delta_{1,2,3}(a_{132})\;\; \text{and}\;\; a_{32,32}=a_{2,2}-\Delta_{3,2,1}(a_{132}), \\
a_{32,12} &=-\Delta_{1,2,1}(a_{132})\;\;\text{and}\;\; a_{12,32}=-\Delta_{3,2,3}(a_{132}), \\
a_{132,132} &=a_{12,12}+\Delta_{3}(\Delta_{1,2}-\Delta_{2,1})(a_{132})=a_{32,32}+\Delta_{1}(\Delta_{3,2}-\Delta_{2,3})(a_{132}), \\
a_{2132,2132} &=a_{132,132}+\Delta_{2,1,3,2}(a_{2132})+\Delta_{2,1,3}s_2(a_{132}).
\end{align*}
which can be also expressed as the following diagram of operators (here we denote $a_{ijk..,lmn..}$ by $a_{ijk..}^{lmn..}$)
{\small \[
\xymatrix@C=20pt@R=15pt{
a_\emptyset \ar@{.>}[drr]^{s_2} & & & & & & & \\
a_{2} \ar[rr]^{\Delta_2} & & a_{2}^{2} \ar@{.>}@/^/[ddrr]^{s_1} \ar@{.>}@/^30pt/[dddrrr]^{s_3} & & & & & \\
 & & & & & & & \\
a_{12} \ar[uu]^{-\Delta_1} \ar@/^10pt/[rr]|-{\Delta_2} & a_{32} \ar[uul]_{-\Delta_3}  \ar@/_10pt/[rr]|-{\Delta_2} & a_{12}^{2} \ar@/^10pt/[rr]|-{\Delta_1} \ar@/_15pt/[rrd]|-{\Delta_3} \ar@{.>}[rrddd]|-{s_3} & a_{32}^{2} \ar@/_10pt/[rr]|-{\Delta_1} \ar@/_2pt/[rrd]|-{\Delta_3} \ar@{.>}[rdd]|-{s_1}& a_{12}^{12}  \ar@{.>}@/^20pt/[ddrr]^{s_3}  & a_{32}^{12} & & \\
& & & & a_{12}^{32} & a_{32}^{32} \ar@{.>}[dr]^{s_1} & & \\
a_{132}  \ar[rr]^{\Delta_2} \ar@{.>}[ddrr]^{s_2} \ar[uu]^{-\Delta_3} \ar[uur]_{-\Delta_1}  &  &  a_{132}^{2}  \ar[rr]^{\Delta_1}  \ar[rrd]_{\Delta_3} &  &  a_{132}^{12}  \ar[rr]^{\Delta_3} &  & a_{132}^{132}  \ar@{.>}[ddr]^{s_2} &   \\
                 &  &                     & &  a_{132}^{32} \ar[rru]_{\Delta_1}   &  &                        & \\
a_{2132} \ar[rr]^{\Delta_2} &  &  a_{2132}^{2}  \ar[rr]^{\Delta_1}  \ar[rrd]_{\Delta_3}  &  &  a_{2132}^{12} \ar[rr]^{\Delta_3} &  & a_{2132}^{132} \ar[r]^{\Delta_2}  &  a_{2132}^{2132} \\
                 &  &                     & &  a_{2132}^{32} \ar[rru]_{\Delta_1} &  &                        &
}
\]}
\begin{lem}
For any polynomial $g$ of degree 3 we have
\[
\Delta_{3,2,1}(g)\equiv \Delta_{1,2,3}(g)\equiv\Delta_{3,2,3}(g)\equiv \Delta_{1,2,1}(g)\;  \text{ and}
\]
\[
\Delta_{2,1,3}(g)\equiv \Delta_{3,1,2}(g)\equiv \Delta_{1,3,2}(g)\equiv 0 \mod 2.
\]
\end{lem}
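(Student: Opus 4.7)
The plan is to verify both families of congruences by a direct but symmetry-reduced computation on the degree-three part $\Ss^{(3)}$ of the polynomial ring $\Ss=\mathbb{Z}[\alpha_1,\alpha_2,\alpha_3]$. Each operator $\Delta_{i_1,i_2,i_3}$ appearing in the statement is a $\mathbb{Z}$-linear functional $\Ss^{(3)}\to\mathbb{Z}$, so every claimed congruence modulo~$2$ reduces to a finite list of integer evaluations on a $\mathbb{Z}$-basis of $\Ss^{(3)}$.

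First I exploit two symmetries. The commutation $\Delta_1\Delta_3=\Delta_3\Delta_1$, valid because $\alpha_1$ and $\alpha_3$ are orthogonal, gives the operator identities $\Delta_{3,1,2}=\Delta_{1,3,2}$ and $\Delta_{2,1,3}=\Delta_{2,3,1}$, which in particular establishes the equality $\Delta_{3,1,2}=\Delta_{1,3,2}$ in the second family. The diagram automorphism $\tau$ of $A_3$, which swaps $\alpha_1\leftrightarrow\alpha_3$ and fixes $\alpha_2$, induces a ring automorphism of $\Ss$ intertwining the $\Delta_i$'s via $\tau\circ\Delta_i=\Delta_{\tau(i)}\circ\tau$, so any mod-$2$ identity of operators on $\Ss^{(3)}$ implies its $\tau$-conjugate identity. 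In family~(A), $\tau$ exchanges $\Delta_{3,2,1}\leftrightarrow\Delta_{1,2,3}$ and $\Delta_{3,2,3}\leftrightarrow\Delta_{1,2,1}$, so it suffices to check the two representative congruences $\Delta_{3,2,1}\equiv\Delta_{1,2,3}$ and $\Delta_{3,2,1}\equiv\Delta_{3,2,3}\pmod 2$. In family~(B), each of $\Delta_{2,1,3}$ and $\Delta_{1,3,2}$ is $\tau$-invariant, so only $\Delta_{2,1,3}\equiv 0$ and $\Delta_{1,3,2}\equiv 0\pmod 2$ must be verified.

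These four residual congruences I verify by evaluating on the ten-element monomial basis $\{\alpha_1^a\alpha_2^b\alpha_3^c : a+b+c=3\}$, using the Leibniz rule $\Delta_i(fg)=\Delta_i(f)\,g+s_i(f)\,\Delta_i(g)$ and the elementary values $\Delta_i(\alpha_i)=1$, $\Delta_i(\alpha_j)=-1$ for $|i-j|=1$, and $\Delta_i(\alpha_j)=0$ for $|i-j|\geq 2$. A large fraction of the $40$ evaluations is trivially zero because $\Delta_3$ annihilates any polynomial in $\alpha_1,\alpha_2$ and $\Delta_1$ annihilates any polynomial in $\alpha_2,\alpha_3$; further reductions come from pairing $\tau$-related monomials such as $\alpha_1^a\alpha_2^b\alpha_3^c$ and $\alpha_3^a\alpha_2^b\alpha_1^c$. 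The remaining nonzero values assemble into a $4\times 10$ table of integers whose entries are checked to be even. The main (and only) obstacle is the combinatorial bookkeeping of these Leibniz expansions; there is no conceptual difficulty.
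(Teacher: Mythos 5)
Your overall strategy---reducing by the commutation $\Delta_1\Delta_3=\Delta_3\Delta_1$ and the diagram automorphism $\tau$, then finishing by explicit Leibniz evaluation on a monomial basis---is in the same spirit as the paper's proof, and your two symmetry reductions are correct. The genuine gap is in your identification of the coefficient ring $\Ss$. This lemma sits in the Klein quadric paragraph, where the character lattice is declared to be $T^*=\langle\alpha_1,\alpha_2,\alpha_3,\omega_2\rangle$ (the paper even points out that $T^*$ modulo the root lattice is $\Z/2\Z$). For the Chow theory, $\Ss$ is the symmetric algebra on $T^*$, not on the root lattice, and since $\omega_2=\tfrac12(\alpha_1+2\alpha_2+\alpha_3)$ is \emph{not} in $\mathbb{Z}[\alpha_1,\alpha_2,\alpha_3]$, your ten monomials $\alpha_1^a\alpha_2^b\alpha_3^c$ span only a proper sublattice of the degree-three component of $\Ss$, of index a power of~$2$. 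Verifying a congruence modulo~$2$ on a sublattice of even index does not establish it on the whole lattice. The fix is to evaluate on a genuine $\mathbb{Z}$-basis of $S^3(T^*)$, for example the monomials in $\{\alpha_1,\alpha_2,\omega_2\}$; this is precisely why the paper's own computation explicitly tests $\alpha_2\omega_2\alpha_1$ in addition to $\alpha_2^2\alpha_1$.

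Two smaller points. First, $\Delta_i(\alpha_i)=\langle\alpha_i,\alpha_i^\vee\rangle=2$, not $1$; modulo~$2$ this is~$0$, which affects the bookkeeping. Second, the paper's reduction is considerably sharper than tabulating over every basis monomial: it observes that $\Delta_1$ and $\Delta_3$ vanish modulo~$2$ on any polynomial not involving $\alpha_2$, which cuts the first family down to two monomials, and for the second family it uses $\Delta_{1,3}(h)\equiv 0$ for all quadratic $h$ (disposing of $\Delta_{1,3,2}=\Delta_{3,1,2}$ at once) together with a single check of $\Delta_{2,1,3}(\alpha_2^3)$. Your proposed $4\times 10$ table would work once computed over the correct ring, but it is heavier than necessary.
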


\begin{proof}
As for the first chain of congruences, observe that $\Delta_1(g)\equiv \Delta_3(g)\equiv 0$ for any polynomial $g$ which does not contain $\alpha_2$, and the computations are symmetric with respect to 
$\alpha_1$ and $\alpha_3$. So it is enough to check it only on monomials
$\alpha_2^2\alpha_1$ and $\alpha_2\omega_2\alpha_1$. Direct computations $\mod 2$ then give
\begin{align*}
\Delta_{3,2,1}(\alpha_2^2\alpha_1) &\equiv \Delta_{3,2}(\alpha_1^2)\equiv \Delta_3(\alpha_2)\equiv 1,\text{ and } \\
\Delta_{1,2,3}(\alpha_2^2\alpha_1) &\equiv \Delta_{1,2}(\alpha_1\alpha_3)\equiv \Delta_1(\alpha_1+\alpha_2+\alpha_3)\equiv 1;\\
\Delta_{3,2,1}(\alpha_2\omega_2\alpha_1) &\equiv \Delta_{3,2}(\omega_2\alpha_1)\equiv \Delta_3(\alpha_1+\alpha_2-\omega_2)\equiv 1,\text{ and } \\
\Delta_{1,2,3}(\alpha_2\omega_2\alpha_1) &\equiv \Delta_{1,2}(\omega_2\alpha_1)\equiv \Delta_1(\alpha_1+\alpha_2-\omega_2)\equiv 1.
\end{align*}
Similarly, we get $\Delta_{3,2,3}(g)\equiv \Delta_{1,2,1}(g)$ and
$\Delta_{3,2,3}(g)\equiv \Delta_{1,2,3}(g)$.

As for the second chain of congruences, it is enough to verify that $\Delta_{1,3}(h)\equiv 0$ for any quadratic $h$ and $\Delta_{2,1,3}(\alpha_2^3)\equiv 0$.
Indeed, for quadratic $h$ it reduces to $\Delta_{1,3}(\alpha_2^2)\equiv \Delta_1(\alpha_3)\equiv 0$ and 
\[
\Delta_{2,1,3}(\alpha_2^3) \equiv \Delta_{2,1}(\alpha_2^2+\alpha_2\alpha_3+\alpha_3^2)\equiv  \Delta_2(\alpha_1+\alpha_3)\equiv 0. \qedhere
\]
\end{proof}

Now by Lemma~\ref{lem:idlem} since $\phi$ is an idempotent, all the diagonal entries of the generalized matrix $(a_{v,w})$ are idempotents as well. In particular, the matrix 
\[M=\begin{pmatrix} a_{12,12} & a_{32,12} \\ a_{12,32} & a_{32,32}\end{pmatrix}\] is an idempotent matrix.
Since $a_{12,32}\equiv a_{32,12}$, the matrix $M \mod 2$ is either trivial or the identity, which implies that
\[
a_{12,32}\equiv a_{32,12}\equiv \Delta_{1,2,3}(a_{132}) \equiv \Delta_{3,2,1}(a_{132})\equiv 0 \mod 2.
\]
From the recursive formulas for the diagonal entries we obtain
\[
a_{\emptyset}\equiv a_{2,2}\equiv a_{12,12}\equiv a_{32,32} \equiv a_{132,132}\equiv a_{2132,2132} \equiv 0, 1 \mod 2.
\]
So $\phi$ is trivial and, hence, we obtain 
\begin{prop} Let $G$ be a split simple group of type $\Atype_3$ with the character lattice 
$T^*=\langle \alpha_1,\alpha_2,\alpha_3,\omega_2\rangle$, i.e.\  $G=SL_4/\mu_2$.  
Let $P$ be a parabolic subgroup of type $\Atype_1\times \Atype_1$.

Then the $D$-module $D_P^\star$ is indecomposable over $\Z/2\Z$.
\end{prop}

\begin{rem}
Again in view of Corollary~\ref{cor:mainindec} this fact implies indecomposability of the motive (with $\Z/2\Z$-coefficients) 
of a generic 4-dimensional quadric.
\end{rem}

\subsection{Involution varieties}
Let $G$ be a split simple group of type $\Dtype_4$ and let $P$ be of type $\Atype_3$, i.e.\  $G/P$ is a split 6-dimensional smooth projective quadric.

By definition, the Weyl group $W=\langle s_1,s_2,s_3,s_4\rangle$, $W_P=\langle s_2,s_3,s_4\rangle$ and the set of minimal coset representatives $W^P$ is given by the Hasse diagram
{\small \[
\xymatrix{
1 \ar[r]^{\scriptscriptstyle s_1\cdot } & s_1  \ar[r]^{\scriptscriptstyle s_2\cdot } & s_2s_1 \ar[d]_{\scriptscriptstyle s_3\cdot }\ar[r]^{\scriptscriptstyle s_4\cdot }  & s_4s_2s_1\ar[d]^{\scriptscriptstyle s_3\cdot } & &\\
& & s_3s_2s_1 \ar[r]^{\scriptscriptstyle s_4\cdot }& s_4s_3s_2s_1\ar[r]^{\scriptscriptstyle s_2\cdot } & s_2s_4s_3s_2s_1\ar[r]^{\scriptscriptstyle s_1\cdot } & s_1s_2s_4s_3s_2s_1
}
\]}
By the recursive formulas of Lemma~\ref{lem:recursive} and of Corollary~\ref{cor:res} we obtain (as before we set $a_{ijk..}=a_{s_is_js_k..}$ and $a_\emptyset=a_1$):
\begin{align*}
a_{1,1} &=a_\emptyset+\Delta_{1,2,3,4,2}(a_{24321}), \\
a_{21,21} &=a_{1,1}-\Delta_{2,1,3,4,2}(a_{24321}), \\
a_{321,321} &=a_{21,21}+\Delta_{3,2,1,4,2}(a_{24321})\text{ and } a_{421,421}=a_{21,21}+\Delta_{4,2,1,3,2}(a_{24321}),\\
a_{321,421} &=\Delta_{4,2,1,4,2}(a_{24321})\text{ and } a_{421,321}=\Delta_{3,2,1,3,2}(a_{24321}), \\
a_{4321,4321} &=a_{421,421}+\Delta_{3,2,1,4,2}(a_{24321})-\Delta_{3,4,2,1,2}(a_{24321}) \\
 &= a_{321,321}+ \Delta_{4,2,1,3,2}(a_{24321}) - \Delta_{4,3,2,1,2}(a_{24321}), \\
a_{24321,24321} &= a_{4321,4321}+(\Delta_{2,4,3,2,1}-\Delta_{2,4,3}s_2\Delta_{1,2})(a_{24321}), \\
a_{124321,124321} &=a_{24321,24321}+\Delta_{1,2,4,3,2}(s_1(a_{24321})+\Delta_1(a_{124321})).
\end{align*}
which can be also expressed as the following diagram of operators (here we denote $a_{ijk..,lmn..}$ by $a^{lmn..}_{ijk..}$)
{\small \[
\xymatrix@C=5pt@R=14pt{
a_\emptyset \ar@{.>}[rrd]^{s_1}  & & & & & & & & & & \\
a_1 \ar[rr]^{\Delta_1}  &  & a_1^1 \ar@{.>}[rrd]^{s_2} & & & & & & & & \\
a_{21} \ar[rr]^{\Delta_1} \ar[u]^{-\Delta_2} & & a_{21}^1 \ar[rr]^{\Delta_2} & & a_{21}^{21}  \ar@{.>}[rrdd]|-{s_3} \ar@{.>}@/^10pt/[rrrddd]|-{s_4} & & & & & & \\
  & & & & & & & & & & \\
a_{321} \ar@/^10pt/[rr]|-{\Delta_1} \ar[uu]^{-\Delta_3} & a_{421} \ar@/_10pt/[rr]|-{\Delta_1} \ar[uul]_{-\Delta_4}  & a_{321}^{1} \ar@/^10pt/[rr]|-{\Delta_2}  & a_{421}^1  \ar@/_10pt/[rr]|-{\Delta_2} & a_{321}^{21} \ar@/^10pt/[rr]|-{\Delta_3}\ar@/_10pt/[rrd]|-{\Delta_4} \ar@{.>}@/_10pt/[rrddd]|-{s_4} & a_{421}^{21} \ar@/_10pt/[rr]|-{\Delta_3} \ar@/_2pt/[rrd]|-{\Delta_4} \ar@{.>}@/_9pt/[rdd]|-{s_3} & a_{321}^{321} \ar@{.>}@/^15pt/[rrdd]|-{s_4} & a_{421}^{321} &  &   &  \\
& & & & & & a_{321}^{421}  & a_{421}^{421} \ar@{.>}[rd]|-{s_3}& & &  \\
a_{4321} \ar[rr]^{\Delta_1} \ar[uu]^{-\Delta_4} \ar[uur]_{-\Delta_3} & & a_{4321}^{1} \ar[rr]^{\Delta_2} \ar@{.>}[rrdd]^{s_2} & & a_{4321}^{21} \ar[rr]^{\Delta_3}\ar[rrd]_{\Delta_4} & & a_{4321}^{321} \ar[rr]^{\Delta_4} & & a_{4321}^{4321}   \ar@{.>}[rdd]^{s_2} &   &  \\
& & & & & & a_{4321}^{421} \ar[rru]_{\Delta_3} & & & &  \\
a_{24321} \ar[rr]^{\Delta_1} \ar@{.>}[rrdd]^{s_1} \ar[uu]^{-\Delta_2} & & a_{24321}^{1} \ar[rr]^{\Delta_2}& & a_{24321}^{21} \ar[rr]^{\Delta_3}\ar[rrd]_{\Delta_4} & & a_{24321}^{321} \ar[rr]^{\Delta_4} & & a_{24321}^{4321} \ar[r]^{\Delta_2} & a_{24321}^{24321}  \ar@{.>}[rdd]^{s_1} &  \\
& & & & & & a_{24321}^{421} \ar[rru]_{\Delta_3} & & & &  \\
a_{124321} \ar[rr]^{\Delta_1} & & a_{124321}^{1} \ar[rr]^{\Delta_2}& & a_{124321}^{21} \ar[rr]^{\Delta_3}\ar[rrd]_{\Delta_4} & & a_{124321}^{321} \ar[rr]^{\Delta_4} & & a_{124321}^{4321} \ar[r]^{\Delta_2} & a_{124321}^{24321} \ar[r]^{\Delta_1} & a_{124321}^{124321} \\
& & & & & & a_{124321}^{421} \ar[rru]_{\Delta_3} & & & & 
}
\]}
Let $\Delta_{i_1,i_2,..}^d$ denote the image $\Delta_{i_1,i_2,..}(Sym^d(T^*))$ modulo $2$.

\paragraph{\it $PGO_8$-case}
Assume $T^*$ corresponds to the root lattice, i.e.\ $G=PGO_8$.

\begin{lem}\label{lem:pgo8} 
We have $\Delta_{3,4}^2=0$, $\Delta_{3,4}^3=\langle \alpha_3+\alpha_4 \rangle$,
\[
\Delta_{3,4}^4=\langle (\alpha_3+\alpha_4)\alpha_1,  (\alpha_3+\alpha_4)\alpha_3, (\alpha_3+\alpha_4)\alpha_4\rangle,\quad \Delta_{2,3,4}^4=\langle \alpha_3+\alpha_4\rangle.
\]
Moreover, it holds for any permutation of the set of subscripts $\{1,3,4\}$.
\end{lem}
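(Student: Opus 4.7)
The plan is a direct computation, structured by the observation that $\alpha_3$ and $\alpha_4$ are orthogonal in the $D_4$ Dynkin diagram, so that $s_3$ and $s_4$ commute and therefore $\Delta_3$ and $\Delta_4$ commute. This gives the closed form
\[
\Delta_{3,4}(f)=\frac{f-s_3(f)-s_4(f)+s_3s_4(f)}{\alpha_3\alpha_4},
\]
whose output is automatically fixed by $\langle s_3,s_4\rangle$. Working modulo $2$ the calculations simplify considerably: one has $\Delta_i(\alpha_i^{2k})=0$ and $\Delta_i(\alpha_i^{2k+1})=2\alpha_i^{2k}\equiv 0$, while $s_i(\alpha_j)=\alpha_j$ for distinct $i,j\in\{1,3,4\}$ since these three simple roots are pairwise orthogonal. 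In particular, multiplication by $\alpha_1$ commutes with both $\Delta_3$ and $\Delta_4$, so $\Delta_{3,4}$ is $\Z[\alpha_1]$-linear, which I will use to reduce degree-$4$ statements to degree-$3$ ones.

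For the inclusions $\supseteq$ I would exhibit explicit witnesses using the Leibniz rule $\Delta_i(fg)=\Delta_i(f)g+s_i(f)\Delta_i(g)$. Direct expansion yields $\Delta_{3,4}(\alpha_2^3)=6\alpha_2+3\alpha_3+3\alpha_4\equiv \alpha_3+\alpha_4\pmod 2$, settling the degree-$3$ claim; by $\Z[\alpha_1]$-linearity this produces $(\alpha_3+\alpha_4)\alpha_1$ from $\Delta_{3,4}(\alpha_1\alpha_2^3)$. Analogous computations on $\alpha_2^3\alpha_3$ and $\alpha_2^3\alpha_4$ produce the remaining degree-$4$ generators $(\alpha_3+\alpha_4)\alpha_3$ and $(\alpha_3+\alpha_4)\alpha_4$; finally applying $\Delta_2$ to $\Delta_{3,4}(\alpha_2^3\alpha_3)$ gives $\alpha_3+\alpha_4$ modulo $2$, which is the generator required for $\Delta_{2,3,4}^4$.

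For the opposite inclusions $\subseteq$ I would tabulate $\Delta_{3,4}$ on the monomial basis $\{\alpha_1^{e_1}\alpha_2^{e_2}\alpha_3^{e_3}\alpha_4^{e_4}\}$ of $S^d(T^*)$ for $d=2,3,4$ and check that each output lies in the asserted span modulo $2$. The decisive simplification is that any monomial with $e_2=0$ is annihilated modulo $2$: expanding by Leibniz, a factor $\alpha_j^k$ with $j\in\{1,3,4\}$ and $j\ne i$ commutes through $\Delta_i$, and $\Delta_i(\alpha_i^k)$ always carries a factor of $2$. So the only monomials producing nonzero classes modulo $2$ are those involving $\alpha_2$, and Leibniz reduces these back to the base cases $\Delta_{3,4}(\alpha_2^k)$ decorated by factors from $\Z[\alpha_1,\alpha_3,\alpha_4]$.

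The principal obstacle is the bookkeeping in degree $4$, where $S^4(T^*)$ has rank $35$; however the $\Z[\alpha_1]$-linearity, the vanishing $\Delta_i(\alpha_i^k)\equiv 0 \pmod 2$, and the fact that $s_3(\alpha_3)=-\alpha_3\equiv \alpha_3 \pmod 2$ together collapse most cases to the handful of base calculations above, leaving only a manageable number of genuinely new monomials to verify. Finally, the invariance of the statement under permutations of $\{1,3,4\}$ is a consequence of the triality automorphism of $D_4$, which permutes the three outer simple roots while fixing $\alpha_2$ and preserves the root lattice; it therefore suffices to establish the formulas for a single labeling.
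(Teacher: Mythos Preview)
Your proposal is correct and follows essentially the same approach as the paper: the paper's proof is a one-liner recording precisely the key facts you rely on, namely that $\Delta_3$ and $\Delta_4$ are trivial mod~$2$ on all simple roots except $\alpha_2$, together with the base computations $\Delta_{3,4}(\alpha_2^2)\equiv\Delta_{3,4}(\alpha_2^4)\equiv 0$ and $\Delta_{3,4}(\alpha_2^3)\equiv\alpha_3+\alpha_4$, from which the claimed images follow by Leibniz exactly as you describe. Your explicit appeal to triality for the permutation invariance is a small bonus the paper leaves implicit.
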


\begin{proof}
Follows from the fact that $\Delta_3$ and $\Delta_4$ are trivial mod 2 on all simple roots except $\alpha_2$ and that $\Delta_{3,4}(\alpha_2^2)\equiv \Delta_{3,4}(\alpha_2^4)\equiv 0$,
$\Delta_{3,4}(\alpha_2^3)\equiv \alpha_3+\alpha_4$.
\end{proof}

From the lemma we immediately obtain 
\begin{align*}
a_{124321,124321}, a_{1,1}: & \Delta_{1,2,3,4}^4=\Delta_{1}(\Delta_{2,3,4}^4)=0 \\
a_{24321,24321}, a_{21,21}: & \Delta_{2,3,4}^3=0, \; \Delta_{2,1}(\Delta_{3,4}^4)=0 \\
a_{321,321}, a_{421,421}: & \Delta_{3,2,1,4}^4=0,\; \Delta_{4,2,1,3}^4=0 \\
a_{4321,4321}: & \Delta_{4,2,1,3}^4=0,\; \Delta_{3,4}^2=0 \\
\end{align*}
If $\phi\in E_D$ is an idempotent, it gives 
\[
a_\emptyset \equiv a_{1,1}\equiv a_{21,21}\equiv a_{321,321}\equiv a_{421,421} \equiv a_{4321,4321}\equiv a_{24321,24321}\equiv a_{124321,124321}.
\]
So there are no non-trivial idempotents
and we obtain
\begin{prop} Let $G=PGO_8$ and let $P$ be the maximal parabolic subgroup generated by all simple reflections except the first one. 

Then the $D$-module $D_P^\star$ is indecomposable over $\Z/2\Z$.
\end{prop}

\begin{rem}
In view of Corollary~\ref{cor:mainindec} this fact implies indecomposability of the motive of a generic twisted form (e.g. of involution variety)
of a 6-dimensional split quadric.
\end{rem}

\paragraph{\it $SO_8$-case}
Assume that $\omega_1\in T^*$, that is $G=SO_8$.
Then Lemma~\ref{lem:pgo8} turns into
\begin{lem} We have $\Delta_{3,4}^2=0$, $\Delta_{3,4}^3=\langle \alpha_3+\alpha_4 \rangle$,
\[
\Delta_{3,4}^4=\langle (\alpha_3+\alpha_4)\alpha_1,  (\alpha_3+\alpha_4)\alpha_3, (\alpha_3+\alpha_4)\alpha_4, (\alpha_3+\alpha_4)\omega_1\rangle,\quad \Delta_{2,3,4}^4=\langle \alpha_3+\alpha_4\rangle.
\]
\end{lem}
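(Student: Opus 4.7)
The plan is to imitate the proof of Lemma~\ref{lem:pgo8} for $PGO_8$, the only new input being the extra generator $\omega_1 \in T^*$. The key observation is that $\omega_1$ is orthogonal to $\alpha_2, \alpha_3, \alpha_4$, hence is fixed exactly by each of the reflections $s_2, s_3, s_4$; in particular $\Delta_2(\omega_1) = \Delta_3(\omega_1) = \Delta_4(\omega_1) = 0$. Thus for the purposes of $\Delta_{3,4}$ the variable $\omega_1$ behaves in every respect like $\alpha_1$ did in the $PGO_8$ case.

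First I would use the Leibniz rule to reduce to a computation in $\alpha_2$ alone. Modulo $2$, the operators $\Delta_3$ and $\Delta_4$ kill each of $\alpha_1, \alpha_3, \alpha_4, \omega_1$, so two applications of $\Delta_i(fg) = \Delta_i(f)\,g + s_i(f)\Delta_i(g)$ yield
\[
\Delta_{3,4}(\alpha_2^k Q) \equiv \Delta_{3,4}(\alpha_2^k) \cdot Q \pmod{2}
\]
for any polynomial $Q$ in $\alpha_1, \alpha_3, \alpha_4, \omega_1$. The computations $\Delta_{3,4}(\alpha_2^k) \equiv 0$ for $k \le 2$ and $k = 4$, and $\Delta_{3,4}(\alpha_2^3) \equiv \alpha_3 + \alpha_4$, already carried out in Lemma~\ref{lem:pgo8}, extend verbatim. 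Only the term $k = 3$ contributes in each case, and summing over $Q$ of the appropriate degree gives zero in degree $2$, $\langle \alpha_3+\alpha_4 \rangle$ in degree $3$, and $\langle (\alpha_3+\alpha_4)\alpha_1, (\alpha_3+\alpha_4)\alpha_3, (\alpha_3+\alpha_4)\alpha_4, (\alpha_3+\alpha_4)\omega_1 \rangle$ in degree $4$, as claimed.

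For the last assertion I would apply $\Delta_2$ to each of the four generators of $\Delta_{3,4}^4$. Since $s_2(\alpha_3+\alpha_4) = \alpha_3+\alpha_4 + 2\alpha_2 \equiv \alpha_3 + \alpha_4 \pmod{2}$, Leibniz gives $\Delta_2((\alpha_3+\alpha_4)\,x) \equiv (\alpha_3+\alpha_4)\,\Delta_2(x) \pmod{2}$. A direct check shows $\Delta_2(\alpha_j) \equiv 1$ for $j \in \{1, 3, 4\}$, while $\Delta_2(\omega_1) = 0$ by the fundamental observation above. Hence three of the four generators produce $\alpha_3 + \alpha_4$ and the $\omega_1$-generator is killed, yielding $\Delta_{2,3,4}^4 = \langle \alpha_3+\alpha_4 \rangle$. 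No serious obstacle is expected; the substantive new ingredient beyond Lemma~\ref{lem:pgo8} is simply the vanishing of $\Delta_2, \Delta_3, \Delta_4$ on $\omega_1$, and once that is noted the rest is a routine extension.
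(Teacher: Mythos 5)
Your proof is correct and takes essentially the same route the paper intends: the paper states this lemma without a separate proof, saying only that Lemma~\ref{lem:pgo8} ``turns into'' it, so the one-line argument for $PGO_8$ is meant to carry over once one adjoins $\omega_1$ to $T^*$. You make that implicit step explicit by noting that $\omega_1$ is fixed by $s_2,s_3,s_4$ (equivalently $\Delta_2(\omega_1)=\Delta_3(\omega_1)=\Delta_4(\omega_1)=0$), so $\omega_1$ enters the Leibniz bookkeeping exactly as $\alpha_1,\alpha_3,\alpha_4$ already did, and the only nonzero contribution to $\Delta_{3,4}$ on $\alpha_2^kQ$ comes from $k=3$; the computation of $\Delta_{2,3,4}^4$ by applying $\Delta_2$ to the four generators is likewise the intended argument.
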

So we obtain
\begin{align*}
a_{124321,124321}, a_{1,1}: & \Delta_{1,2,3,4}^4=\Delta_{1}(\Delta_{2,3,4}^4)=0 \\
a_{24321,24321}, a_{21,21}: & \Delta_{2,3,4}^3=0, \; \Delta_{2,1}(\Delta_{3,4}^4)=0 
\end{align*}
which gives only that
\[
a_\emptyset \equiv a_{1,1}\equiv a_{21,21}\quad \text{ and }\quad a_{4321,4321}\equiv a_{24321,24321}\equiv a_{124321,124321}.
\]

Since $\Delta_3(f)\equiv \Delta_4(f)$ for any linear $f$, we have
\[
\Delta_{4,2,1,3,2}\equiv \Delta_{3,2,1,3,2}.
\]
Moreover, direct computations show that
\[
\Delta_{3,2,3}(\alpha_2^2\alpha_3)\equiv \Delta_{3,2,3}(\alpha_2^2\alpha_4)\equiv 1.
\]
So that $\Delta_{3,2,3}(g) \equiv \Delta_{4,2,4}(g)$.
Combining, we obtain
\[
\Delta_{4,2,1,3}= \Delta_{4,2,3,1}\equiv \Delta_{3,2,3,1}\equiv \Delta_{4,2,4,1}\equiv \Delta_{3,2,4,1} =\Delta_{3,2,1,4}
\]
So $a_{21,21}\equiv a_{321,321}\equiv a_{421,421}\equiv a_{4321,4321}$.
Hence, there are no non-trivial idempotents as well and we get

\begin{prop} Let $G=SO_8$ and let $P$ be the maximal parabolic subgroup generated by all simple reflections except the first one. 

Then the $D$-module $D_P^\star$ is indecomposable over $\Z/2\Z$.
\end{prop}

\begin{rem}
This fact implies indecomposability of the motive of a generic 6-dimensional quadric.
\end{rem}

\paragraph{\it $HSpin_8$-case}
Assume $\omega_4\in T^*$ that is $G=HSpin_8$. Then $T^*=\langle \alpha_2,\alpha_3,\alpha_4,\omega_4\rangle$.

We claim that $\Delta_{1,2,3,4,2}(a_{24321})\equiv 0$. 
Indeed, let $f=\Delta_{2,3,4,2}(a_{24321}) \in Sym^1(T^*)$. Then 
$\Delta_3(f)=\Delta_{3,2,3,4,2}(a_{24321})=\Delta_{2,3,2,4,2}(a_{24321})=\Delta_{2,3,4,2}(\Delta_4(a_{24321}))=0$.
Now for $f=a_2\alpha_2+a_3\alpha_3+a_4\alpha_4+b\omega_4$ we get $\Delta_1(f)\equiv a_2\mod 2$ but
$\Delta_3(f)\equiv a_2\mod 2$ as well.

Similarly, $\Delta_{3,2,1,4,2}(a_{24321})\equiv 0$. In this case denote $f=\Delta_{2,1,4,2}(a_{24321})$. Then
$\Delta_1(f)=\Delta_{1,2,1,4,2}(a_{24321})=\Delta_{2,1,2,4,2}(a_{24321})=\Delta_{2,1,4,2}(\Delta_4(a_{24321}))=0$.
And $\Delta_3(f)\equiv \Delta_1(f)\equiv 0$.

By the same arguments, $\Delta_{3,2,1,3,2}(a_{24321})\equiv \Delta_{3,4,2,1,2}\equiv 0$.

Consider now $\Delta_{2,1,3,4,2}(a_{24321})$. Let $g=\Delta_{3,4,2}(a_{24321})$. We have $\Delta_{3,2}(g)=0$.
Let $g=\sum_{2\le i\le j} c_{ij}\alpha_i\alpha_j+\sum_{2\le i} b_i\omega_4\alpha_i+d \omega_4^2$.
Then 
\[
\Delta_2(g)\equiv c_{22}(\alpha_2+\alpha_3+\alpha_4)+\alpha_2(c_{23}+c_{24})+\omega_4(b_3+b_4).
\]
The fact that $\Delta_{3,2}(g)\equiv 0$ implies that $c_{22}+c_{23}+c_{24}\equiv 0$. But
\[
\Delta_1(g)\equiv c_{22}\alpha_1+c_{23}\alpha_3+c_{24}\alpha_4+b_2\omega_4.
\]
So that $\Delta_{2,1}(g)\equiv (c_{22}+c_{23}+c_{24})\equiv 0$.
Combining we obtain that
\[
a_\emptyset\equiv a_{1,1}\equiv a_{21,21}\equiv a_{321,321}\equiv a_{421,321}
\]
and
\[
a_{421,421}\equiv a_{321,421}\equiv a_{4321,4321}\equiv a_{24321,24321}\equiv a_{124321,124321}.
\]

So the $D$-module $D_P^\star$ is either irreducible or it splits into two indecomposable direct summands with  a generating function $1+t+t^2+t^3$ (over $S$) each.

\begin{rem}\label{rem:hspin}
This fact implies that the motive of a $HSpin_8$-versal involution variety $E/P$ 
is either indecomposable or splits as a direct sum $[E/P]=N\oplus N(3)$, where $N$ is indecomposable with a generating function
$1+t+t^2+t^3$. Using know result on motives of quadratic forms  (e.g. that after splitting the algebra, the motive of a $Spin_8$-versal quadric splits into 2-fold Rost motives~\cite[XVII]{EKM}) it follows that the second decomposition is impossible, i.e.\ $[E/P]$ is indecomposable.
\end{rem}


\appendix

\section{Generic nilpotence for cobordism}

This appendix is included to prove Lemma~\ref{lem:hintersect}, the obvious generalization from Chow groups to algebraic cobordism of \cite[Lemma~6.2]{VZ}. We therefore closely follow arguments of \cite{VZ}, while making the necessary adjustments to cobordism. 

Let $\Sch_k$ denote the category of reduced schemes of finite type over $k$ and let $\Sm_k$ denote its subcategory of smooth schemes over $k$. Let $\Omega_*(-)$ denote the algebraic cobordism functor of Levine-Morel \cite{LM}.

\begin{dfn}
Let $X \in \Sch_k$ and $i \colon Z\hookrightarrow X$ be a closed subset. We will denote the image of the map $i_*\colon\Omega_*(Z)\to\Omega_*(X)$ by $\Omega_Z(X)$ and say that elements of $\Omega_Z(X)$ are supported on $Z$.
\end{dfn}

\begin{dfn}\label{dfn:gcover} 
We will call a projective map $f\colon Z'\to Z$ in $\Sch_k$ a \emph{good cover} if there are filtrations by closed subsets $\emptyset\subseteq Z'_0\subseteq Z'_1\subseteq\ldots\subseteq Z'_n=Z'$ and $\emptyset\subseteq Z_0\subseteq Z_1\subseteq\ldots\subseteq Z_n=Z$ such that
\begin{itemize}
\item $\dim Z'_i=\dim Z_i$, 
\item $f^{-1}(Z_i-Z_{i-1})\to Z_i-Z_{i-1}$ is an isomorphism and
\item $f(Z'_i)\subseteq Z_i$.
\end{itemize}
for any $0\leqslant i\leqslant n$.
\end{dfn}

\begin{ex}
Here is a typical good cover: If $f\colon X'\to X$ is a projective map that induces an isomorphism of non-empty open subsets $f^{-1}(X-Z)\cong X-Z$ for some closed subset $Z$ of $X$ with $\dim Z < \dim X$, then the map $X'\sqcup Z\to X$ is a good cover: the respective filtrations can be chosen as $\emptyset\subseteq Z\subseteq X'\sqcup Z$ and $\emptyset\subseteq Z\subseteq X$.
\end{ex}

\begin{rem} \label{rem:gcover}
We could have defined good covers inductively by declaring that isomorphisms are good covers and that the map $f\colon X'\to X$ in $\Sch_k$ is a good cover provided there are closed subsets $Z\subseteq X$ and $Z'\subseteq X'$ such that 
\begin{itemize}
\item $\dim X' = \dim X$,
\item $f^{-1}(X-Z)\to X-Z$ is an isomorphism,
\item $f(Z')\subseteq Z$ and $f_{|Z'}\colon Z'\to Z$ is a good cover. 
\end{itemize}
\end{rem}

\begin{lem}\label{gcoversurjects} 
If $f\colon Z'\to Z$ is a good cover, then $f_*\colon \Omega_*(Z')\to\Omega_*(Z)$ is surjective.
\end{lem}

\begin{proof} 
Let $(Z_i)_{i=0}^n$ and $(Z'_i)_{i=0}^n$ be filtrations as in Definition~\ref{dfn:gcover}. We prove that $\Omega_*(Z'_i)\to\Omega_*(Z_i)$ is surjective by induction on $i$. The case $i=0$ is obvious since $Z'_0\cong Z_0$. For the induction step consider the map of localization sequences induced by $f_*$:
\[
\xymatrix{
\Omega_*(f^{-1}(Z_{i-1}))\ar[r]\ar[d] & \Omega_*(Z'_i)\ar[r]\ar[d] & \Omega_*(f^{-1}(Z_i-Z_{i-1}))\ar[d]\ar[r] & 0\\
\Omega_*(Z_{i-1})\ar[r] & \Omega_*(Z_{i})\ar[r] & \Omega_*(Z_i-Z_{i-1})\ar[r] & 0
}
\]
The right vertical map is an isomorphism by Definition~\ref{dfn:gcover}. The left vertical map is surjective because it fits into the composition
\[
\Omega_*(Z'_{i-1})\to\Omega_*(f^{-1}(Z_{i-1}))\to\Omega_*(Z_{i-1})
\]
where the composite map is surjective by the induction hypothesis. Hence the central arrow is surjective as well.
\end{proof}

\begin{lem}\label{lem:gcoverpb} 
Good covers are preserved by flat base-change: if $f\colon Z'\to Z$ is a good cover and $g\colon Y\to Z$ is flat then $f'\colon Y\times_Z Z'\to Y$, the base-change of $f$ to $Y$, is a good cover.
\end{lem}

\begin{proof}
Let $(Z'_i)_{i=1}^n$ and $(Z_i)_{i=1}^n$ be filtrations on $Z'$ and $Z$ as in Definition~\ref{dfn:gcover}, and let $Y_i=g^{-1}(Z_i)$ and $Y'_i=(g')^{-1}(Z'_i)$. These are filtrations on $Y$ and $Y'$ that satisfy Definition~\ref{dfn:gcover}. Indeed, we have $\dim Y'_i=\dim Y_i$ by flatness of $g$ (and $g'$), the map $(f')^{-1}(Y_i-Y_{i-1})\to Y_i-Y_{i-1}$ is the base change of the isomorphism $f^{-1}(Z_i-Z_{i-1}) \to Z_i-Z_{i-1}$ so it is an isomorphism, and the inclusion $f'(Y'_i)\subseteq Y_i$ is obvious.
\end{proof}

\begin{lem}\label{lem:splitting}
Let $X\in\Sch_k$ and let $E$ be a vector bundle over $X$. Suppose that $Z$ is a closed subset of $X$. Then there exists a projective map $p\colon X'\to X$ with $\dim X'=\dim X$ and a closed subset $Z'$ of $X'$ such that 
\begin{enumerate}
\item \label{item:subbundles} $p^*E$ has a filtration by subbundles whose successive quotients are line bundles;
\item \label{item:good} $p(Z')\subseteq Z$, and the map $p\colon Z'\to Z$ is a good cover.
\end{enumerate} 
\end{lem}

\begin{proof}
Consider the projection $p\colon \Fl(E)\to X$ where $\Fl(E)$ is the variety of complete flags of the vector bundle $E$ over $X$. Then $p^*E$ has a filtration by tautological subbundles. Any point $x\in X$ is contained in an open neighbourhood $U$ over which the bundle $E$ is trivial, so $\Fl(E)|_{U}\simeq \Fl\times U$ and there is a section $s\colon U\to p^{-1}(U)$. Let $X'_U$ be the closure of $s(U)$ in $\Fl(E)$. Note that the projection $(p_{|X'_U})^{-1}(U) \to U$ is an isomorphism.

We will construct $X'$ as a disjoint union of schemes of the form $X'_U$ by induction on $d=\dim Z$:

When $d=0$, the set $Z$ is a finite union of closed points $z_1,\ldots,z_n$. Each $z_i$ is contained in some neighbourhood $U_i$ as above and we can take $X'=X'_{U_1}\sqcup\ldots\sqcup X'_{U_n}$. Each $X'_{U_i}$ contains a copy of the point $z_i$, and we can take $Z'$ to be the disjoint union of those copies. Then $Z'=Z$ is a good cover.

Consider the case of a general $d$. Let $Z_1,\ldots,Z_n$ be irreducible components of $Z$. For any $i$ there is a neighbourhood $U_i$ of the generic point of $Z_i$  such that $Z_i\cap U_i$ is disjoint with $Z_j$ for $j\neq i$, and $E$ is trivial over $U_i$. Then let $Z'_i$ be the closure of $Z_i\cap U_i$ in $X'_{U_i}$. Observe that the projection $p_{|Z'_i}\colon Z'_i\to Z_i$ induces an isomorphism $(p_{|Z'_i})^{-1}(Z_i\cap U_i)\to Z_i\cap U_i$.

Let $D_i=Z_i-(Z_i\cap U_i)$ and $D=\cup_{i=1}^n D_i$. Then $\dim D<d$ and by the induction hypothesis there is a map $p_D\colon X'_D\to X$ and a closed subset $Z'_D\subseteq X'_D$ such that $p_D\colon Z'_D\to D$ is a good cover, and $p_D^*(E)$ satisfies~\eqref{item:subbundles}. Take 
\[
X'=X'_D\sqcup X'_{U_1}\ldots\sqcup X'_{U_n}, Z'=Z'_D\sqcup Z'_1\sqcup\ldots\sqcup Z'_n.
\]
Consider the map $p_{|Z'}\colon Z'\to Z$. We have $Z-D=\sqcup_i (Z_i\cap U_i)$, hence $(p_{|Z'})^{-1}(Z-D)\to Z-D$ is an isomorphism. By Remark~\ref{rem:gcover}, the map $p_{|Z'}$ is a good cover, so $X'$ and $Z'$ satisfy conditions~\eqref{item:subbundles} and~\eqref{item:good}.
\end{proof}

\begin{lem}\label{lem:divsupport}
Let $X\in\Sch_k$ and let $E$ be a vector bundle over $X$ that has a filtration by subbundles with line bundle quotients $E_1,\ldots,E_d$. Assume that $D_1,\ldots,D_d$ are divisors such that $E_i$ is trivial over $X-D_i$ for each $i$. Let $D=D_1\cap\cdots\cap D_d$. Then for any $\alpha\in\Omega_Z(X)$ the element $\widetilde{c}_d(E)\cap\alpha$ lies in $\Omega_{Z\cap D}(X)$. 
\end{lem}

\begin{proof}
By the Whitney formula, we have $\widetilde{c}_d(E)=\widetilde{c}_1(E_1)\circ\ldots\circ\widetilde{c}_1(E_d)$. For any $\alpha\in\Omega_Z(X)$ the element $\widetilde{c}_1(E_d)\cap\alpha$ vanishes in $\Omega_{Z-D_d}(X-D_d)$, hence it lies in $\Omega_{Z\cap D_d}(X)$. The statement thus follows by induction on $d$.
\end{proof}

\begin{lem}\label{lem:chernintersection}
Let $E$ be a vector bundle of rank $d$ on a quasi-projective variety $X$ and let $Z$ be a closed subset of $X$.
Then there is a closed subset $\widetilde{Z}$ of $Z$ such that $\dim\widetilde{Z}\leqslant\dim Z-d$ and for any smooth morphism $f\colon Y \to X$ of schemes in $\Sch_k$ and any $\alpha\in\Omega_{f^{-1}(Z)}(Y)$ the element $\widetilde{c}_d(f^*E)\cap\alpha$ is supported on $f^{-1}(\widetilde{Z})$.
\end{lem}

\begin{rem}
If $\dim Z -d<0$, then $\dim \widetilde{Z}<0$ so $\widetilde{Z}=\emptyset$, and the lemma concludes that $\widetilde{c}_d(f^*E)\cap\alpha=0$ as already known by \cite[2.3.13]{LM}. 
\end{rem}

\begin{proof}
By Lemma~\ref{lem:splitting} there is a projective map $p\colon X'\to X$ and a closed subset $Z'\subseteq X$ such that $p_{|Z'}\colon Z'\to Z$ is a good cover while $p^*E$ has a filtration with line bundle quotients $E_1,\ldots, E_d$. Using quasi-projectivity of $X$, let $D_1,\ldots, D_d$ denote divisors such that each $E_i$ is trivial over $X'-D_i$. Set $D=D_1\cap \cdots \cap D_d$. One can furthermore ensure that $Z'\cap D$ and thus $\widetilde{Z}=p(Z'\cap D)$ has dimension at most $\dim Z'-d=\dim Z -d$. Let $Y'=X'\times_X Y$ and let $g\colon Y'\to X'$ be the canonical projection. Then $g^{-1}(Z')\to f^{-1}(Z)$ is a good cover by~\ref{lem:gcoverpb}. By Lemma~\ref{gcoversurjects} the projection $q\colon Y'\to Y$ induces the surjective push-forward $\Omega_*(g^{-1}(Z')) \to \Omega_*(f^{-1}(Z))$ and thus $\Omega_{g^{-1}(Z')}(Y')\to\Omega_{f^{-1}(Z)}(Y)$ is also surjective. Given $\alpha\in \Omega_{f^{-1}(Z)}(Y)$, choose a preimage $\alpha'\in\Omega_{g^{-1}(Z')}(Y')$. Then $\widetilde{c}_d(f^*E)\cap\alpha=q_*(\widetilde{c}_d(q^*f^*E)\cap\alpha')$, and $\widetilde{c}_d(q^*f^*E)\cap\alpha'$ is supported on $g^{-1}(Z'\cap D)$ by Lemma~\ref{lem:divsupport}, thus $\widetilde{c}_d(f^*E)\cap\alpha$ is supported on $q(g^{-1}(Z'\cap D))$ which is contained in $f^{-1}(\widetilde{Z})$.
\end{proof}

\begin{lem}(cf.\ \cite[Lemma 6.3]{VZ})\label{lem:intersect}
Let $f\colon V\hookrightarrow B$ and $g\colon T \hookrightarrow B$ be closed embeddings with regular $f$ and smooth quasi-projective $B$. Then there exists a closed embedding $h\colon Z\hookrightarrow V$ such that $\codim h\geqslant\codim g$, and for any smooth morphism $\varepsilon\colon W\to B$ we have $\im \big(f_W^*\circ (g_W)_*\big)\subseteq \im \big((h_W)_*\big)$ inside $\Omega_*(V_W)$, where $f_W$, $g_W$ and $h_W$ are the base changes of $f$, $g$ and $h$ to $W$ along $\varepsilon$.
\end{lem}

\begin{proof}
Consider the intersection $\widetilde{T}$ of $T$ and $Z$ in $B$ and base-change it along $\varepsilon$ to obtain the Cartesian squares
\[
\begin{tikzcd}
T\ar[r,hook,"g"] & B\\
\tilde{T}\ar[r,hook,"\tilde{g}"] \ar[u,hook,"\tilde{f}"] & V\ar[u,hook,"f"]
\end{tikzcd}
\qquad\text{and}\qquad
\begin{tikzcd}
T_W\ar[r,hook,"g_W"] & W\\
\widetilde{T}_W \ar[u,hook,"\widetilde{f}_W"] \ar[r,hook,"\widetilde{g}_W"] & V_W\ar[u,hook,"f_W"]
\end{tikzcd}
\]
By~\cite[Proposition 6.6.3]{LM} $f_W^*\circ (g_W)_*=(\widetilde{g}_W)_*\circ f_W^!$ where the refined pullback $f_W^!$ is given by the composition~\cite[(6.17), \S 6.6.2]{LM}
\[
\Omega_*(T_W)\to \Omega_*(C_W)\to \Omega_*(N_W)\to \Omega_{*-d}(\widetilde{T}_W)
\]
where $d$ is the codimension of $f$, while $C_W$ is the normal cone of $\widetilde{f}_W$ and $N_W=\widetilde{g}_W^*(N_{f_W})$ is the the normal bundle of $f_W$ pulled back to $\widetilde{T}_W$.

Let $N=\widetilde{g}^*(N_f)$ be the pullback of the normal bundle of $f$ and $C$ be the normal cone of the map $\widetilde{f}$, both being bundles over $\widetilde{T}$. Let $q\colon\PP(N\oplus 1)\to \widetilde{T}$ be the structural map. Note that by flatness of $\varepsilon\colon W \to B$, base changing $C$ (resp.\ $N$) to $W$ does yield $C_W$ (resp.\ $N_W$). Applying Lemma~\ref{lem:chernintersection} to the vector bundle $E=q^*N\otimes\Os(1)$ and to the closed subset $\PP(C\oplus 1)$ inside $\PP(N\oplus 1)$, we get a closed subset $\widetilde{Z}$ of codimension at least $d$ in $\PP(C\oplus 1)$ such that for every smooth morphism $\epsilon\colon W \to B$ and every cobordism class $x$ supported on $\PP(C_W\oplus 1)$ the class $\widetilde{c}_d(\varepsilon^*E)\cap x$ is supported on $\varepsilon^{-1}(\widetilde{Z})$. Set $Z=q(\widetilde{Z})$.
By Lemma~\ref{lem:zeropullback} below, the right hand side of the diagram
\[
\begin{tikzcd}[column sep=large]
\Omega_*\big(\PP(C_W\oplus 1)\big) \ar[d,twoheadrightarrow] \ar[r] & \Omega_*\big(\PP(N_W\oplus 1)\big) \ar[d,twoheadrightarrow] \ar[r,"\widetilde{c}_d(\varepsilon^*E)\cap - "] & \Omega_*\big(\PP(N_W\oplus 1)\big) \ar[d] \\
\Omega_*(C_W) \ar[r] & \Omega_*(N_W) \ar[r] & \Omega_*(\widetilde{T}_W)
\end{tikzcd}
\]
is commutative (and so is the left hand-side), while the left vertical map is surjective since $C_W$ is open in $\PP(C_W\oplus 1)$. Therefore, the image of the composition $\Omega_*(C_W)\to \Omega_*(N_W)\to \Omega_{*-d}(\widetilde{T}_W)$ is supported on $q(\varepsilon^{-1}(\widetilde{Z}))\subseteq \varepsilon^{-1}(Z)=Z_W$, i.e.\ is in $\Omega_{Z_W}(\widetilde{T}_W)$ which maps to $\Omega_{Z_W}(V_W)=\im\big((h_W)_*\big)$. Thus, $h\colon Z\hookrightarrow V$ satisfies the requirements.
\end{proof}

\begin{lem}\label{lem:zeropullback}
Let $X\in\Sch_k$ and let $E\to X$ be a rank $d$ vector bundle with a zero section $z\colon X\to E$. Let $q\colon \PP(E \oplus 1) \to X$ be the structural map and let $a\colon E \to \PP(E \oplus 1)$ be the natural open affine chart sending $e$ to $[e:1]$. Then the following diagram commutes.
\[
\xymatrix{
\Omega_*(\PP(E\oplus 1))\ar[d]_{a^*} \ar[rrr]^{\widetilde{c}_d(q^*E\otimes\Os(1))\cap-} &&& \Omega_{*-d}(\PP(E\oplus 1))\ar[d]^{q_*}\\
\Omega_*(E)\ar[rrr]^{z^*} &&& \Omega_{*-d}(X)
}
\]
\end{lem}

\begin{proof}
The composition $\Os(-1)\to q^*(E\oplus 1)\to q^*E$ of the natural embedding and projection defines a global section $s$ of the sheaf $q^*E\otimes\Os(1)=\underline{\Hom}(\Os(-1),q^*E)$. The zero set of $s$ is $X \simeq \PP(1)$ (regularly) embedded in $\PP(E \oplus 1)$, and this embedding coincides with $\bar{s}=a \circ z$. By~\cite[Lemma~6.6.7]{LM}, the operator $\widetilde{c}_d(q^*E\otimes\Os(1))\cap -$ on $\Omega_*(\PP(E\oplus 1))$ is given by $\bar{s}_*\bar{s}^*$. The right-down composition is thus $q_*\bar{s}_*\bar{s}^*=\bar{s}^*=z^* a^*$.
\end{proof}

Let $\hh$ be an oriented cohomology theory that is generically constant (i.e.\  the pull-back $\hh(k) \to \hh(L)$ is an isomorphism for any field extensions $L/k$) and the canonical map from algebraic cobordism $\Omega(X)\otimes_{\mathbb{L}}\hh(k)\to\hh(X)$ is surjective for every $X\in\Sm_k$. 
An example of such theory is given by the free theory of \cite[\S2.12]{GV}.

\begin{lem}\label{lem:hintersect}
Let $X$ be a smooth quasi-projective variety. Let $i_1\colon Z_1\hookrightarrow X$ and $i_2\colon Z_2\hookrightarrow X$ be closed embeddings. Then there exists a closed embedding $i_3\colon Z_3\hookrightarrow X$ such that for any smooth morphism $\pi\colon Y\to X$,
\[
\codim Z_3\geqslant \codim Z_1+\codim Z_2
\quad\text{and}\quad
\im(i'_1)_*\cdot\im(i'_2)_*\subseteq im(i'_3)_*\text{ in }\hh(Y),
\]
where $i'_j\colon Z_j\times_X Y\to Y$, $j=1,2,3$ are the base change along $\pi$ of the respective closed embeddings.
\end{lem}

\begin{proof}
We first consider the case $\hh=\Omega$. Applying Lemma~\ref{lem:intersect} to $B=X \times X$, $f=\Delta_X\colon X \to X \times X$, $g=i_1 \times i_2\colon Z_1 \times Z_2 \to X \times X$, one obtains a closed embedding $h\colon Z \hookrightarrow X$ such that $\codim Z\geqslant \codim Z_1+\codim Z_2$. Choosing $W=Y\times Y$ and $\varepsilon=\pi \times \pi\colon Y \times Y \to X \times X$, it follows that $f_W$ is the natural inclusion $Y \times_X Y \to Y \times Y$, $h_W\colon  Z_{Y \times Y} \hookrightarrow Y\times_X Y$ is the obvious map and 
\[
\im \big(f_W^*\circ(i'_1\times i'_2)_*\big)\subseteq \im \big((h_W)_*\big).
\]
The diagonal embedding $Y\to Y\times Y$ factors as $Y\stackrel{\phi}\to Y\times_X Y\stackrel{f_W}\to Y\times Y$. Using the Cartesian square
\[
\begin{tikzcd}
Y \ar[r,"\phi"] & Y\times_X Y \\
Z_Y \ar[r,"\phi_Z"] \ar[u,"h_Y"] & Z_{Y\times Y} \ar[u,"h_W"]
\end{tikzcd}
\]
and \cite[Proposition 6.6.3]{LM}, we obtain $\phi^*\circ (h_W)_*=(h_Y)_*\circ \phi_Z^{!}$,
\[
\im(i'_1)_*\cdot\im(i'_2)_*= \im\big( \Delta_Y^*\circ (i_1'\times i_s')_* \big) \subseteq \im \big((h_Y)_*\big).
\]
Thus, $i_3=h$ satisfies the requirements. Finally, the natural map $\Omega_*(-)\otimes_{\mathbb{L}}\hh(k)\to\hh(-)$ is surjective and compatible with push-forwards and the intersection product, so we can replace $\Omega$ by $\hh$.
\end{proof}

\bibliographystyle{plain}

\begin{thebibliography}{10}



\bibitem{CPZ}
B.~Calm\`es, V.~Petrov, K.~Zainoulline, Invariants, torsion indices and cohomology of complete flags. 
{\it Ann. Sci. Ecole Norm. Sup.} (4) {\bf 46} (2013), no.3, 405--448.


\bibitem{CZZ1} 
B.~Calm\`es, K.~Zainoulline, C.~Zhong, 
Push-pull operators on the formal affine Demazure algebra and its dual, 
{\it Manuscripta Math.} (2018) {\tt https://doi.org/10.1007/s00229-018-1058-4}  

\bibitem{CZZ2} 
B.~Calm\`es, K.~Zainoulline, C.~Zhong, 
Equivariant oriented cohomology of flag varieties. 
{\it Documenta Math.} Extra Volume: Alexander S. Merkurjev's 60th Birthday (2015), 113--144.

\bibitem{EG} 
D.~Edidin, W.~Graham,  
{\it Equivariant intersection theory,} 
{\it Invent. Math.} {\bf 131} (1998), no.3, 595--634.

\bibitem{EKM} 
R.~Elman, N.~Karpenko, A.~Merkurjev, 
Algebraic and geometric theory of quadratic forms. 
{\it AMS Colloquium Publ.} {\bf 56}, 
AMS, Providence, RI, 2008. 435pp. 

\bibitem{GV} 
S.~Gille, A.~Vishik, 
The Rost Nilpotence and free theories, 
{\it Documenta Math.}  {\bf 23}  (2018), 1635--1657. 



\bibitem{HML} 
J.~Heller, J.~Malag\'on-L\'opez, 
Equivariant algebraic cobordism, 
{\it J.~Reine~Angew.~Math.} {\bf 684} (2013), 87--112.


\bibitem{Ka17} 
N.~Karpenko,
Chow ring of generically twisted varieties of complete flags, 
{\it Advances in Math.} {\bf 306} (2017), 789--806. 

\bibitem{Ka96}
N.~Karpenko,  
The Grothendieck Chow-motifs of Severi-Brauer varieties, 
{\it St. Petersburg Math. J.} {\bf 7} (1996), no.4, 649--661.

\bibitem{KK90}
B.~Kostant, S.~Kumar,   $T$-Equivariant K-theory of generalized flag varieties, {\it  J. Differential Geometry} {\bf 32} (1990), 549--603.


\bibitem{KK86}
B.~Kostant, S~Kumar,   The nil Hecke ring and cohomology of $G/P$ for a Kac-Moody group $G$, {\it  Advances in Math.} {\bf 62} (1986), 187--237.


\bibitem{LZZ} 
C.~Lenart, K.~Zainoulline, C.~Zhong, 
Parabolic Kazhdan-Lusztig basis, Schubert classes and equivariant oriented cohomology,  
{\it J. Inst. Math. Jussieu} (2019), 
{\tt https://doi.org/10.1017/S1474748018000592}

\bibitem{LM} 
M.~Levine, F.~Morel, 
Algebraic cobordism, Springer Monographs in Math., Springer, Berlin, 2007. xii+244pp.


\bibitem{Me}
A.~Merkurjev,   
Equivariant K-theory, Handbook of K-theory Vol. {\bf 1}, 2, 925--954, Springer, Berlin, 2005.

\bibitem{Ne16} 
A.~Neshitov, 
Motivic decompositions and Hecke-type algebras, PhD Thesis, University of Ottawa (2016).

\bibitem{NPSZ}
A.~Neshitov, V.~Petrov, N.~Semenov, K.~Zainoulline, 
Motivic decompositions of twisted flag varieties and representations of Hecke-type algebras,
{\it Advances in Math.} {\bf 340} (2018), 791--818.

\bibitem{PS}
V.~Petrov, N.~Semenov, 
Rost motives, affine varieties, and classifying spaces, 
{\it J. London Math. Soc.} {\bf 95} (2017), no.3, 895--918.

\bibitem{PSZ} 
V.~Petrov, N.~Semenov, K.~Zainoulline,  
$J$-invariant of linear algebraic groups, 
{\it Ann. Sci. \'Ecole Norm. Sup.} (4) {\bf 41} (2008), no.6, 1023--1053.

\bibitem{Ro98}
M.~Rost, The motive of a Pfister form, Preprint 1998.

\bibitem{To}
B.~Totaro,  
The Chow ring of a classifying space,
Algebraic $K$-theory (Seattle, WA, 1997), 249--281, {\it Proc. Sympos. Pure Math.} {\bf 67}, Amer. Math. Soc., Providence, RI, 1999.

\bibitem{Vi00}
A.~Vishik,   Motives of quadrics with applications to the theory of quadratic forms (English) Tignol, Jean-Pierre (ed.), Geometric methods in the algebraic theory of quadratic forms, Proceedings of the summer school, Lens, France, June 2000. Berlin: Springer.

\bibitem{VZ} 
A.~Vishik, K.~Zainoulline, 
Motivic splitting lemma, 
{\it Documenta Math.} {\bf 13} (2008), 81--96.

\end{thebibliography}

\end{document}